\newtheorem{theorem}{Theorem}[section]
\newtheorem{lemma}{Lemma}[section]
\newtheorem{proposition}{Proposition}[section]
\newtheorem{corollary}{Corollary}[section]
\theoremstyle{definition}
\newtheorem{definition}{Defintion}[section]
\newtheorem{notation}[definition]{Notation}
\theoremstyle{remark}
\newtheorem{remark}{Remark}[section]
\numberwithin{equation}{subsection}
\newcommand*\colvec[1]{
	\global\colveccount#1
	\begin{pmatrix}
		\colvecnext
	}
	\def\colvecnext#1{
		#1
		\global\advance\colveccount-1
		\ifnum\colveccount>0
		\\
		\expandafter\colvecnext
		\else
	\end{pmatrix}
	\fi
}
\newcommand{\Go}{\mathsf{G}_0}
\newcommand{\G}{\mathsf{G}}
\newcommand{\Ga}{\mathsf{G}_a}
\newcommand{\Gl}{\mathsf{SL}}
\newcommand{\sL}{\mathsf{L}}
\newcommand{\sZ}{\mathsf{Z}}
\newcommand{\fg}{\mathfrak{g}}
\newcommand{\sP}{\mathsf{P}}
\newcommand{\sQ}{\mathsf{Q}}
\newcommand{\sT}{\mathsf{T}}
\newcommand{\sH}{\mathsf{H}}
\newcommand{\cX}{\mathcal{X}}
\newcommand{\cY}{\mathcal{Y}}
\newcommand{\cC}{\mathcal{C}}
\newcommand{\R}{\mathbb{R}}
\newcommand{\s}{\mathsf{Stab}}
\newcommand{\sHom}{\mathsf{Hom}}
\newcommand{\oW}{\overrightarrow{W}}
\newcommand{\oV}{\overrightarrow{V}}
\newcommand{\oR}{\overrightarrow{\mathbb{R}}}
\newcommand{\defeq}{\mathrel{\mathop:}=}
\newcommand{\flow}{\mathsf{U}\Gamma}
\newcommand{\cflow}{\widetilde{\mathsf{U}\Gamma}}
\newcommand{\bdry}{\partial_\infty\Gamma}
\newcommand{\bH}{\mathbb{H}^{n,n-1}}
\newcommand{\bR}{\mathbb{R}^{n,n-1}}
\newcommand{\sad}{\mathsf{Ad}}
\title{Avatars of Margulis invariant and Proper actions}
\author{Sourav Ghosh}
\address{Ashoka University}
\email{sourav.ghosh@ashoka.edu.in, sourav.ghosh.bagui@gmail.com}
\date{\today}
\thanks{The author acknowledge support from DFG SPP 2026 grant, OPEN/16/11405402 grant and the Ashoka University annual research grant.}
\begin{document}
	\begin{abstract}
		In this article, we provide a necessary and sufficient criterion for proper actions on $\mathbb{H}^{n,n-1}$ in terms of certain special Anosov representations in $\mathsf{SO}(n,n)$. Moreover, we show that affine Anosov representations of any word hyperbolic group in $\mathsf{SO}(n,n-1)\ltimes\mathbb{R}^{2n-1}$ are infinitesimal versions of such special Anosov representations. Finally, using the above two results we interpret Margulis spacetimes as infinitesimal versions of quotient manifolds of $\mathbb{H}^{n,n-1}$.
		
		In the appendix, we give a description of the appropriate cross-ratios in our setting and their infinitesimal versions.
	\end{abstract}
	
	\maketitle
	\tableofcontents

	\pagebreak

	\section{Introduction}
	
	The study of tilings gives rise to the study of proper actions. In a celebrated result, Bieberbach classified the symmetries of `crystals'. Any subgroup of $\mathsf{O}(n,\R)\ltimes\R^n$ whose action on $\R^n$ is proper and cocompact is called a \emph{crystallographic} group. Bieberbach \cite{B1,B2} (see also \cite{Buser}) showed that for each fixed $n$ there are only finitely many isomorphism classes of $n$-dimensional crystallographic groups and they contain a normal subgroup of finite index isomorphic to $\mathbb{Z}^{n}$. Later, Auslander and Markus \cite{AM} constructed examples of subgroups of $\mathsf{Aff}(n,\R):=\mathsf{GL}(n,\R)\ltimes\R^n$ whose action on $\R^n$ are proper and cocompact and which do not have a normal subgroup of finite index isomorphic to $\mathbb{Z}^n$. The examples they constructed were normal subgroups of finite index isomorphic to $\mathbb{Z}^{\ltimes n}$. Auslander \cite{Aus,Aus2} attempted to show that any subgroup of $\mathsf{O}(n,\R)\ltimes\R^n$ whose action on $\R^n$ is proper and cocompact is virtually polycyclic and failed. Later, the statement was rechristened as \emph{Auslander conjecture} by Fried--Goldman \cite{FG}. This conjecture is still unsolved for the general case but has been shown to hold true in dimension less than $7$ by the work of Fried--Goldman \cite{FG} and Abels--Margulis--Soifer \cite{AMS2}. Recently, a generalization of the Auslander conjecture for homogeneous spaces appeared in \cite{Tom} with partial resolution. In fact, the study of proper action of discrete groups on homogeneous spaces can be traced back to Borel \cite{Borel}. He studied compact quotients of symmetric spaces. Later, the study of pseudo-Riemannian homogeneous spaces was pioneered by Kulkarni \cite{Kul}, Kobayashi \cite{Kob}, Benoist \cite{Ben} and Okuda \cite{Oku}. They found that the signature of a homogeneous space acts as an obstruction for the existence of proper actions of a discrete group on it (please check \cite{Kob3} for a recent survey). Let $g\in\mathsf{GL}(n,\R)$, $v\in\R^n$ and $(g,v)\in\mathsf{Aff}(n,\R)$. Then we call $\mathsf{GL}(n,\R)$ (resp. $g$) the \emph{linear} part of $\mathsf{Aff}(n,\R)$ (resp. $(g,v)$) and $\R^n$ (resp. $v$) the \emph{translational} part of $\mathsf{Aff}(n,\R)$ (resp. $(g,v)$). We observe that the map $L$ sending any element to its linear part is a homomorphism and for any subgroup $\sH$ of $\mathsf{Aff}(n,\R)$ we call $L(\sH)$ the linear part of $\sH$.
	
	Meanwhile, Margulis \cite{Margulis1,Margulis2} answered a question asked by Milnor \cite{Milnor} related to the Auslander conjecture in the negative. He showed that the conjecture would fail if one drops the cocompactness assumption. He constructed actions of non-abelian free subgroups of $\mathsf{GL}(3,\R)\ltimes\R^3$ which act properly on $\R^3$. It was known due to prior works of Kostant--Sullivan \cite{KS} and Fried--Goldman \cite{FG} that the linear part of the Zariski closure of such a group has to be $\mathsf{SO}(2,1)$. The quotient manifolds obtained from $\R^3$ under the proper action of a non-abelian free subgroup of $\mathsf{SO}(2,1)\ltimes\R^3$ are called \emph{Margulis spacetimes}. Furthermore, Drumm \cite{Drumm2} classified the linear holonomy of Margulis spacetimes and constructed \cite{Drumm1} nice fundamental domains for a large class of them. It was conjectured that any Margulis spacetime admits a fundamental domain of the type constructed by Drumm. This conjecture is known as the \emph{Crooked plane conjecture}. Recently, the Crooked plane conjecture was resolved by Danciger--Gu\' eritaud--Kassel \cite{DGK2}.  While constructing Margulis spacetimes, Margulis introduced certain invariants, which later came to be known as the Margulis invariants, to give a necessary criterion for proper actions. Later, Labourie \cite{Labourie2} introduced continuous versions of the Margulis invariants. We call these invariants as Labourie-Margulis invariants. Subsequently, Goldman--Labourie--Margulis \cite{GLM} used the Labourie-Margulis invariants to give a necessary and sufficient condition for a representation to be a Margulis spacetime. Classification results of similar nature were also independently obtained by Danciger--Gu\' eritaud--Kassel \cite{DGK1} using different techniques. In fact, using those techniques they went on to prove a twenty year old conjecture of Drumm-Goldman. They demonstrated that Margulis spacetimes are tame, i.e. Margulis spacetimes are homeomorphic to the interior of a compact manifold with boundary. Tameness of Margulis spacetimes was also independently proved by Choi--Goldman \cite{CG}. Recently, similar results for Margulis spacetimes with parabolics was obtained by Choi--Drumm--Goldman \cite{ChoiDG}. The construction of Margulis spacetimes and Margulis invariants were generalized by Abels--Margulis--Soifer \cite{AMS}. For $n$ even they constructed more examples of non-abelian free subgroups of $\mathsf{GL}(2n-1,\R)\ltimes\R^{2n-1}$ whose Zariski closure is $\mathsf{SO}(n,n-1)\ltimes\R^{2n-1}$ and which act properly on $\R^{2n-1}$. They also proved the nonexistence of any such subgroups for odd $n$. Suppose $\Gamma$ is a finitely generated hyperbolic group. We abuse notation and call an injective homomorphism $(\rho,u):\Gamma\to\mathsf{SO}(n,n-1)\ltimes\R^{2n-1}$, a Margulis spacetime if $(\rho,u)(\Gamma)\backslash\R^{2n-1}$ is a manifold. The constructions of \cite{AMS} were further generalized by Smilga in \cite{Smilga,Smilga4}. Generalizing previous works of Kim \cite{Kim} and Ghosh \cite{Ghosh5}, recently it was proved in \cite{Ghosh6} that only finitely many well chosen Margulis invariants are enough to determine a conjugacy class of Margulis spacetimes. It might appear from the above discussion that free groups are the only groups which admit proper affine actions but that is not the case. First examples of proper affine actions of right-angled Coxeter groups was obtained by Danciger--Gu\' eritaud--Kassel \cite{DGK3}. It remains to be seen if there are other classes of groups which admit proper actions or not.
	
	The classification results of Goldman--Labourie--Margulis \cite{GLM} and Goldman--Labourie \cite{GL} point towards a description of Margulis spacetimes in terms of Anosov representations. This project is still ongoing with affirmative results by Ghosh \cite{Ghosh1} and Ghosh--Treib \cite{GT}. Anosov representations generalize the notion of convex cocompactness for higher rank Lie groups \cite{GGKW}. They were introduced by Labourie \cite{Labourie} to provide a geometric characterization of certain special representations of surface groups called \emph{Hitchin representations}. Hitchin representations can be defined as those representations which lie in the components of the representation variety containing Fuchsian representations of a surface group. One interesting thing about Hitchin representations is that their moduli space is topologically trivial and it is the same as the moduli of solutions of the Yang--Mills equations under certain symmetry conditions \cite{Hit}. Interestingly, Hitchin representations also make their appearance in the study of Margulis spacetimes through the works of Danciger--Zhang \cite{DZ} and Labourie \cite{Labourie3}. Generalizing previous results of Goldman--Margulis \cite{GM}, Mess \cite{Mess} and Labourie \cite{Labourie2}, they showed that Hitchin representations in $\mathsf{SO}(n,n-1)$ do not admit affine deformations which act properly on $\R^{2n-1}$. The definition of an Anosov representation given by Labourie is dynamical in nature and its dynamics resembles the notion of an Axiom A flow appearing in the dynamical systems literature \cite{Bowen, ru}. Later on, Guichard--Wienhard \cite{GW2} extended the notion of an Anosov representation to include representations of any finitely generated hyperbolic group into semisimple Lie groups. They also established relations between Anosov representations and proper actions on homogeneous spaces. Subsequently, a more algebraic description of Anosov representations, in terms of uniform gaps in singular value or eigenvalue spectra, and relation between Anosov representations and proper actions on homogeneous spaces appeared in the works of Kapovich--Leeb--Porti \cite{KLP,KLP1,KLP2}, Gu\'eritaud--Guichard--Kassel--Wienhard \cite{GGKW}, Bochi--Potrie--Sambarino \cite{BPS} and Kassel--Potrie \cite{KP}. In this article, we stick to the dynamical description of an Anosov representation. Further dynamical properties of these representations were proved by Bridgeman--Canary--Labourie--Sambarino in \cite{BCLS}. The appropriate version of Anosov representations of a hyperbolic group into affine Lie groups of the kind $\mathsf{SO}(n,n-1)\ltimes\R^{2n-1}$, was introduced in the works of Ghosh \cite{Ghosh1,Ghosh2} and Ghosh--Treib \cite{GT}. They showed that Margulis spacetimes can be characterized by affine Anosov representations. We prove a similar characterization relating proper actions of hyperbolic groups on the homogeneous space $\mathbb{H}^{n,n-1}$ and certain special Anosov representations (this generalization provide an alternate proof of a statement from \cite{DZ} communicated to us by Danciger and Zhang). Let $\Gamma$ be a finitely generated hyperbolic group. We denote the inner product on $\R^{2n}$ whose symmetry group is $\mathsf{SO}(n,n)$ by $\langle\mid\rangle$ and the connected component containing identity of $\mathsf{SO}(n,n)$ by $\G$. Let $\R^{n,n}=\R^{n,n-1}\oplus\R e_{2n}$ and $\Go$ be the group of those elements in $\G$ which fix the vector $e_{2n}$. Let $\bH:=\G e_{2n}\subset\R^{2n}$ and $\Ga:=\G_0\ltimes\R^{2n-1}$. We observe that $\bH\cong\G/\G_0$. We call a subspace of $\R^{2n-1}$ a \emph{null} subspace if its orthogonal is a maximal isotropic subspace of $\R^{n,n-1}$. We note that maximal isotropic subspaces of $\R^{n,n-1}$ are $(n-1)$-dimensional and null subspaces of $\R^{n,n-1}$ are $n$-dimensional. Let $\sP_0^\pm$ be the stabilizer inside $\G_0$ of two transverse $n$-dimensional null subspaces of $\R^{n,n-1}$, $\sP_a^\pm$ be the stabilizer inside $\Ga$ of two transverse $n$-dimensional null subspaces of $\R^{n,n-1}$ and $\sP^\pm$ be the stabilizer inside $\G$ of two $(n-1)$-dimensional isotropic subspaces of $\R^{n,n}$ which are oriented and whose orthogonal subspaces are transverse to each other (for more details see Sections \ref{sec:proto-neutral} and \ref{sec:aar}). Moreover, we denote $\sHom(\Gamma,\G_0,\sP_0^\pm)$ (resp. $\sHom(\Gamma,\G,\sP^\pm)$) to be the space of all injective homomorphisms of $\Gamma$ inside $\G_0$ (resp. $\G$) which are Anosov with respect to $\sP_0^\pm$ (resp. $\sP^\pm$) and we denote $\sHom(\Gamma,\Ga,\sP_a^\pm)$ to be the space of all injective homomorphisms of $\Gamma$ inside $\Ga$ which are affine Anosov with respect to $\sP_a^\pm$. We prove the following:
	\begin{theorem}\label{thm1} (see Theorem \ref{thm:p})
		We recall that $\G$ is the connected component of $\mathsf{SO}(n,n)$ containing identity. Suppose $\rho$ is a representation of $\Gamma$ in $\G$ which is Anosov with respect to the stabilizer of an oriented $(n-1)$-dimensional isotropic subspace of $\R^{n,n}$, that is, $\rho\in\sHom(\Gamma,\G,\sP^\pm)$. Then the action of $\rho(\Gamma)$ on $\bH$ is proper if and only if $\rho$ is Anosov in $\mathsf{SL}(2n,\R)$ with respect to the stabilizer of an $n$-dimensional subspace. 
	\end{theorem}
	Goldman--Margulis \cite{GM} kicked off the study of Margulis spacetimes via deformation of related objects. They showed that the marked Margulis invariant spectrum of an affine representation in dimension three can be interpreted as derivatives of the marked length spectrum of surfaces. Deformation techniques were also used by Danciger--Gu\' eritaud--Kassel \cite{DGK1} to show that three dimensional Margulis spacetimes are the rescaled limits of collapsing AdS spacetimes. In fact, their proof of the Crooked plane conjecture \cite{DGK3} also used deformation techniques but in the context of arc complexes. A general framework to describe transitions between geometries which are sub-geometries of a larger ambient geometry was developed by Cooper--Danciger--Wienhard \cite{CDW}. Furthermore, in a different context Kassel \cite{Ka} and Gu\' eritaud--Kassel \cite{GK} showed that small deformations of proper actions still give rise to proper actions. In this article, we generalize the derivative interpretation of the Margulis invariants and show that Margulis invariant spectra can be obtained as the derivative of the middle eigenvalue gap spectra of representations in $\G$. Let $h\in\Go$ be \emph{pseudo-hyperbolic} i.e. the unit eigenspace of $h$ is one dimensional and $h$ does not have $-1$ as an eigenvalue. Let $W^h_\pm$ respectively be the subspaces of $\R^{2n-1}$ on which the action of $h$ is contracting or expanding. There is a consistent way of choosing a direction along the unique eigenspace of $h$ with eigenvalue $1$ (for more details please see Section \ref{sec:aar}). Let $v^h_0$ be the unique eigenvector of $h$ with eigenvalue $1$ such that $\langle v^h_0\mid v^h_0\rangle=1$ and which is positively oriented with respect to the aforementioned choice of direction. In particular, when $n=2$, the subspaces $W^h_\pm$ are one dimensional and light-like. The space of light-like vectors is a double cone. The choice in this case is done as follows: we choose non zero vectors $v^h_\pm\in W^h_\pm$ which lie in the upper cone and choose $v_0^h$ in such a way that $(v^h_-,v_0^h,v^h_+)$ is positively oriented with respect to the standard orientation on $\R^3$. Then the \emph{Margulis invariant} of an element $g=(h,u)\in\Ga$ is defined as
	\[\alpha(h,u):=\langle u\mid v^h_0\rangle.\]
	We observe that the dimensions of $W_\pm^h$ are $(n-1)$. Let $h_t\in\G$ be an analytic one parameter family with $h=h_0$. Then, for $t$ small enough we obtain a pair of attracting and repelling subspaces of $h_t$ denoted by $W_\pm^{h_t}$ and whose dimensions are also $(n-1)$. Moreover, there is a consistent way of choosing a maximal isotropic subspace $V_+^{h_t}$ inside $(W_+^{h_t})^\perp\subset\R^{2n}$. It is the maximal isotropic subspace of $(W_+^{h_t})^\perp$ which is the deformation of $V_+^{h}=W_+^{h}\oplus\R(v_0^h+e_{2n})\subset(W_+^{h})^\perp$ (for more details please see Sections \ref{sec:proto-neutral} and \ref{sec:aar}). We observe that $(W_-^{h})^\perp\cap V_+^{h}=\R(v_0^h+e_{2n})$ and that $h_t$ preserves the line $(W_-^{h_t})^\perp\cap V_+^{h_t}$. Let $\lambda(h_t)$ be the eigenvalue of the action of $h_t$ on this line. We prove the following:
	\begin{lemma}\label{lem}
		Suppose $g\in\G_0$ is a pseudo-hyperbolic element and $g_t\in\G$ is an analytic one parameter family whose tangent direction at $g=g_0$ is $G$ and $Ge_{2n}=v$. Then 
		\[\alpha(g,v)=\left.\frac{d}{dt}\right|_{t=0}\lambda(g_t).\]
	\end{lemma}
	We give a similar interpretation of Labourie-Margulis invariant too and use it to show that the notion of an affine Anosov representation detect the infinitesimal versions of certain special Anosov representations in $\G$:
	\begin{theorem}\label{thm2} (see Theorem \ref{thm:affano})
		We recall that $\R^{n,n}=\R^{n,n-1}\oplus\R e_{2n}$, $\G$ is the connected component of $\mathsf{SO}(n,n)$ containing identity, $\G_0$ is the subgroup of $\G$ which fixes  $e_{2n}$ and $\Ga=\G_0\ltimes\R^{n,n-1}$. 
		
		Let $\{\rho_t\}_{t\in(-1,1)}$ be an analytic one parameter family of representations of $\Gamma$ in $\G$ with $\rho_0(\Gamma)\subset\Go$. Let ${U}$ be the tangent vector to $\{\rho_t\}_{t\in(-1,1)}$ at $\rho=\rho_0$ and ${u}={U}e_{2n}$. Suppose $(\rho,u)$ is affine Anosov in $\Ga$ with respect to the stabilizer of an $n$-dimensional null subspace of $\R^{n,n-1}$, that is, $(\rho,u)\in\sHom(\Gamma,\Ga,\sP^\pm_a)$. Then there exists $\epsilon>0$ such that for all $t$ with $|t|\in(0,\epsilon)$, $\rho_t$ is Anosov in $\mathsf{SL}(2n,\R)$ with respect to the stabilizer of an $n$-dimensional subspace.
	\end{theorem}
	Finally, combining these two results and using the characterization of Margulis spacetimes in terms of affine Anosov representations, we show that Margulis spacetimes of dimension $(2n-1)$ are infinitesimal versions of manifolds obtained from quotients of the homogeneous space $\mathbb{H}^{n,n-1}$:
	\begin{corollary}\label{cor1}
		Let $\{\rho_t\}_{t\in(-1,1)}$ be an analytic one parameter family of representations of $\Gamma$ in $\G$ with $\rho=\rho_0$ Anosov in $\Go$ with respect to the stabilizer of an $n$-dimensional null subspace of $\R^{n,n-1}$. Let ${U}$ be the tangent vector to $\{\rho_t\}_{t\in(-1,1)}$ at $\rho=\rho_0$ and ${u}={U}e_{2n}$. Suppose $(\rho,u)$ is a Margulis spacetime. Then there exists $\epsilon>0$ such that for all $t$ with $|t|\in(0,\epsilon)$, $\rho_t(\Gamma)$ acts properly on $\bH$.
	\end{corollary}
	Moreover, using the work of Abels--Margulis--Soifer \cite{AMS} and our main result we provide an alternate proof of the following fact which was first obtained by Benoist \cite{Ben}: 
	\begin{corollary}\label{cor2}
		Suppose $n$ is even. Then there exists a non-abelian free subgroup with finitely many generators inside $\G$ which act properly on $\bH$.
	\end{corollary}
	In the appendix, we also generalize results by Charette--Drumm \cite{CD} and Ghosh \cite{Ghosh1}. We define affine cross ratios $\beta$ for any four mutually transverse affine null subspaces in $\R^{n,n-1}$ and show the following:
	\begin{proposition}\label{prop1}
		Suppose $n$ is even, $\rho\in\sHom(\Gamma,\Go,\sP_0^\pm)$ and $(\rho,u)\in\sHom(\Gamma,\Ga)$. Suppose $\gamma,\eta\in\Gamma$ are two infinite order elements such that the four points $\gamma^\pm,\eta^\pm\in\bdry$ are distinct and the sequence $\{\gamma^m\eta^m\}_{m\in\mathbb{N}}\subset\Gamma$ contains a subsequence $\{\gamma^{n_i}\eta^{n_i}\}_{i\in\mathbb{N}}$ consisting only of infinite order elements. Then the following identity holds:
		\[\lim_{i\to\infty}(\alpha(\gamma^{n_i}\eta^{n_i})-\alpha(\gamma^{n_i})-\alpha(\eta^{n_i}))=\beta(\eta^-, \gamma^-,\gamma^+,\eta^+).\]
	\end{proposition}
	Moreover, we also define the linear counterparts $\theta$ of these affine cross ratios defined for any four mutually transverse $(n-1)$-dimensional isotropic subspaces in $\R^{n,n}$ and show that
	\begin{proposition}\label{prop2}
		Suppose $n$ is even, $\rho\in\sHom(\Gamma,\G,\sP^\pm)$ and $\gamma,\eta\in\Gamma$ are two infinite order elements such that the four points $\gamma^\pm,\eta^\pm\in\bdry$ are distinct and the sequence $\{\gamma^m\eta^m\}_{m\in\mathbb{N}}\subset\Gamma$ contains a subsequence $\{\gamma^{n_i}\eta^{n_i}\}_{i\in\mathbb{N}}$ consisting only of infinite order elements. Then the following identity holds:
		\[\lim_{i\to\infty}\frac{\lambda(\gamma^{n_i}\eta^{n_i})^2}{\lambda(\gamma^{n_i})^2\lambda(\eta^{n_i})^2}= \theta(\eta^-, \gamma^-,\gamma^+,\eta^+)^2.\]
	\end{proposition}
	
	\begin{remark}
		Lastly, we would like to mention that Danciger--Zhang has announced independent work in \cite{DZ} which has overlap with some of our results. In particular, Theorem \ref{thm1}, Lemma \ref{lem} and Theorem \ref{thm2} of this article when applied to fundamental groups of compact surfaces without boundary, are respectively similar to Lemma 8.2, Theorem 8.8 and Theorem 6.1 of \cite{DZ}. On the other hand, the results about cross ratios contained in Sections \ref{sec:acr} and \ref{sec:cr} are not obtained by Danciger--Zhang. We would also like to note that, even though Corollary \ref{cor1} has not been stated as a result in \cite{DZ}, for the case of fundamental groups of compact surfaces without boundary, it can also be obtained by jointly applying Theorems 8.8 and 6.1 of \cite{DZ}. In \cite{DZ}, Danciger--Zhang use Lemma 8.2, Theorem 8.8, Theorem 6.1 and they also use certain properties special to Hitchin representations obtained from the works of Labourie \cite{Labourie} and Fock--Goncharov \cite{FoG} to generalize Theorem 1.1 of \cite{Labourie2} and conclude that representations in $\mathsf{PSL(2n-1,\R)\ltimes\R^{2n-1}}$ whose linear parts are Hitchin do not admit proper affine actions on $\R^{2n-1}$.
	\end{remark}

	\section*{Acknowledgements} 
	
	I acknowledge the independent results obtained in \cite{DZ} and I would like to thank Dr. Tengren Zhang and Dr. Jeffrey Danciger for the statement of Theorem\ref{thm:p}. I would also like to thank Prof. Fran\c cois Labourie, Prof. Anna Wienhard, Dr. Nicolaus Tholozan, Dr. Binbin Xu and Dr. Zhe Sun for helpful discussions.  
	
	Moreover, I want to express my sincerest regards to Ms. Saumya Shukla for helping me achieve a smooth simultaneous arXiv publication with \cite{DZ}. Unfortunately, during the revision of this manuscript, she died an untimely death fighting for justice. I miss her terribly and dedicate this work to her indomitable spirit.

	\section{Preliminaries}
	
	In this section we introduce certain preliminary notions and results needed to establish our results.
	
	\subsection{Anosov representations}
	
	In this subsection we define the notion of an Anosov representation and mention some important properties of Anosov representations which will be used later on. Anosov representations into $\mathsf{SL}(n,\R)$ were introduced by Labourie in \cite{Labourie} to show that Hitchin representations satisfy certain nice geometric properties. Later on, Guichard--Wienhard \cite{GW2} extended the notion of an Anosov representation to representations of any hyperbolic group into a semisimple Lie group. Recently, Kapovich--Leeb--Porti gave a different algebraic characterization of Anosov representations in \cite{KLP} and \cite{KLP1}. In this article, we use the dynamical definition of an Anosov representation from the work of Labourie \cite{Labourie} and Guichard--Wienhard \cite{GW2}.
	
	We start by defining the Gromov flow space. It plays a very central role in the dynamical definition of an Anosov representation. Let $\Gamma$ be a finitely generated word hyperbolic group, $\bdry$ be its boundary at infinity and let 
	\[\bdry^{(2)}\defeq\{(p_+,p_-)\mid p_\pm\in\bdry,p_+\neq p_-\}.\] Gromov \cite{Gromov} (see also Champetier \cite{Champetier} and Mineyev \cite{Mineyev}) constructed a cocompact, proper action of $\Gamma$ on $\cflow\defeq\bdry^{(2)}\times\R$, which commutes with the flow:
	\begin{align*}
		\phi_t:\cflow&\rightarrow\cflow\\
		p\defeq(p_+,p_-,p_0)&\mapsto (p_+,p_-,p_0+t)
	\end{align*}
	and whose restriction on $\bdry^{(2)}$ is the diagonal action coming from the natural action of $\Gamma$ on its boundary $\bdry$. Moreover, there exists a metric on $\cflow$ well defined up to H\"older  equivalence such that the $\Gamma$ action is isometric, the flow $\phi_t$ acts by Lipschitz homeomorphisms and every orbit of the flow $\{\phi_t\}_{t\in\mathbb{R}}$ gives a quasi-isometric embedding. The resulting quotient space denoted by $\flow$ is called the \textit{Gromov flow space}. We note that the Gromov flow space is connected and it admits partitions of unity (for more details please see \cite{GT}).
	
	The other important ingredients in the definition of Anosov representations are parabolic subgroups (for a detailed exposition on parabolic subgroups please see Section 3.2 of \cite{GW2}). In this article, to prove certain results we also work with stabilizers of oriented subspaces. These groups strictly speaking are not necessarily parabolic subgroups but they are subgroups of finite index inside parabolic subgroups. We call such subgroups \emph{virtually parabolic} subgroups. Hence, although the theory of Anosov representations due to Guichard-Wienhard \cite{GW2} does not directly apply to these cases, the original theory due to Labourie \cite{Labourie} does. Let $\sH$ be a connected semisimple Lie group. Moreover, let $\sP_\pm$ be a pair of virtually parabolic subgroups of $\sH$ such that their respective overgroups which are parabolic are opposite and $\sP_+\cap\sP_-$ is a finite index subgroup of the intersection of the two opposite parabolic overgroups. We call such a pair of virtually parabolic subgroups as \emph{opposite}. Let $\cX\subset\sH/\sP_+\times\sH/\sP_-$ be the space of  all pairs $(h\sP_+,h\sP_-)$ for $h\in\sH$. We consider the left action of $\sH$ on $\sH/\sP_+\times\sH/\sP_-$ and observe that the action is transitive on $\cX$ and the stabilizer of the point $(\sP_+,\sP_-)\in\cX$ is $\sP_+\cap\sP_-$. Hence $\sH/(\sP_+\cap\sP_-)\cong\cX$.  Moreover, $\cX$ is open in $\sH/\sP_+\times\sH/\sP_-$. Therefore,
	\[\sT_{\left(h\sP_+,h\sP_-\right)}\cX=\sT_{h\sP_+}\sH/\sP_+\oplus\sT_{h\sP_-}\sH/\sP_-.\]
	\begin{definition}
		Let $\Gamma$ be a hyperbolic group and let $\sH$ be a semisimple Lie group with a pair of opposite virtually parabolic subgroups $\sP_\pm$. Then any representation $\rho:\Gamma\to\sH$ is called $\sP_\pm$-Anosov if and only if 
		\begin{enumerate}
			\item There exist continuous, injective, $\rho(\Gamma)$-equivariant limit maps
			\[\xi^\pm:\bdry\rightarrow\sH/\sP_\pm\]
			such that $\xi(p)\defeq(\xi^+(p_+),\xi^-(p_-))\in\cX$ for any $p\in\cflow$ .
			\item There exist positive constants $C,c$ and a continuous collection of $\rho(\Gamma)$-equivariant Euclidean metrics $\|\cdot\|_p$ on $\sT_{\xi(p)}\cX$ for $p\in\cflow$ such that 
			\begin{align*}
				\|v^\pm\|_{\phi_{\pm t}p}\leqslant Ce^{-ct}\|v^\pm\|_p
			\end{align*}
			for all $v^\pm\in\sT_{\xi^\pm(p_\pm)}\sH/\sP_\pm$ and for all $t\geqslant 0$.
		\end{enumerate}
	\end{definition}
	
	\begin{notation}
		We denote the space of all representations $\rho:\Gamma\to\G$ by $\sHom(\Gamma,\G)$, the space of $\sP_\pm$-Anosov representations in $\G$ by $\sHom(\Gamma,\G,\sP_\pm)$ and the space of all $\alpha$-H\"older maps from $\bdry$ to $\sH/\sP_\pm$ by $\cC^\alpha(\bdry,\sH/\sP_\pm)$.
	\end{notation}
	
	Now we state a few theorems which will be important for us later on.
	\begin{theorem}[Labourie \cite{Labourie},Guichard--Wienhard \cite{GW2}]\label{thm:LGW2}
		Suppose $\rho\in\sHom(\Gamma,\G,\sP_\pm)$. Then there exists an open neighborhood $U\subset\sHom(\Gamma,\sH)$ with $\rho\in U$ such that $U\subset\sHom(\Gamma,\G,\sP_\pm)$.
	\end{theorem}
	\begin{theorem} [Bridgeman--Canary--Labourie--Sambarino\cite{BCLS}]\label{thm:BCLS}
		Suppose $U\subset\sHom(\Gamma,\sH,\sP_\pm)$ is an open ball and suppose $\xi^\pm_\rho$ are the limit maps of $\rho\in U$. Then there exists $U_0\subset U$ such that for all $\rho\in U_0$ the limit maps $\xi^\pm_\rho\in\cC^\alpha(\bdry,\sH/\sP_\pm)$ for some $\alpha>0$ and the following map is analytic:
		\begin{align*}
			\xi^\pm:U_0&\rightarrow \cC^\alpha(\bdry,\sH/\sP_\pm)\\
			\rho&\mapsto\xi^\pm_\rho.
		\end{align*}
		
	\end{theorem}
	\begin{theorem}[Guichard--Wienhard\cite{GW2}]\label{thm:GW2}
		Suppose $\sP_\pm$ and $\sQ_\pm$ respectively are two pairs of opposite virtually parabolic subgroups of $\sH$ such that $\sP_\pm$ respectively are subgroups of $\sQ_\pm$. Then $\sHom(\Gamma,\sH,\sP_\pm)\subset\sHom(\Gamma,\sH,\sQ_\pm)$.
	\end{theorem}
	\begin{remark}\label{rem:nonparano}
		Theorems \ref{thm:LGW2} and \ref{thm:BCLS} as stated above is not proved in \cite{GW2} and \cite{BCLS} but these results also hold true if we replace parabolic subgroups with virtually parabolic subgroups. This is because the proofs given in \cite{BCLS} (Theorems 6.1, 6.5, 6.6 and Lemma 6.7) only depend on the fact that the space $\sH/\sP$ is an analytic manifold and the limit map at the origin satisfies the contraction property (for more details please see Theorem 3.8 of \cite{HPS} and Theorem 5.18 of \cite{Shub}). Finally, Theorem \ref{thm:GW2} would also hold true in this setting as we would get the new limit map for free from the old limit map by composing it with the natural projection of $\sH/\sP$ onto $\sH/\sQ$ and the contraction property would still hold due to its independence from the particular collection of Euclidean norms chosen. Hence, in the remainder of this article we would use the notion of an Anosov representation to include groups of this general nature too.
	\end{remark}
	
	Recently, Stecker--Treib \cite{ST} have given a more algebraic way of characterizing Anosov representations with respect to stabilizers of oriented flags. They use techniques from \cite{KLP,KLP1,KLP2} and \cite{GGKW} to construct domains of discontinuity for oriented flag manifolds.

	\subsection{Pseudo-orthogonal groups}
	
	In this subsection we give a brief description of certain well known properties of pseudo-orthogonal groups which will be helpful later on.
	
	Let $\R^k$ be the space of all column vectors of size $k$ (i.e. matrices of size $k\times 1$) and let  $I_k$ be the identity matrix of size $k\times k$. We endow $\R^{p+q}$ with the following quadratic form of signature $(p,q)$:
	\[I_{p,q}\defeq
	\begin{bmatrix}
		I_{p} & 0\\
		0 & -I_q
	\end{bmatrix}.\]
	We denote $\R^{p+q}$ endowed with the quadratic form $I_{p,q}$ by $\R^{p,q}$ and the group of invertible linear transformations preserving the quadratic form $I_{p,q}$ by $\mathsf{O}(p,q)$. We note that any element of $\mathsf{O}(p,q)$ either has determinant $1$ or $-1$ and denote the subgroup of $\mathsf{O}(p,q)$ whose elements have determinant $1$ by $\mathsf{SO}(p,q)$ and denote the connected component of $\mathsf{SO}(p,q)$ which contains the identity transformation by $\mathsf{SO}_0(p,q)$.
	\begin{remark}\label{rem:orient}
		Suppose $g\in\mathsf{O}(p,q)$ is such that $A_g$ is a $p\times p$ matrix and
		\[g=\begin{bmatrix}
			A_g & B_g\\
			C_g & D_g
		\end{bmatrix}.\]
		Then both $A_g$ and $D_g$ are invertible. Let $\mathsf{sgn}$ denote the sign of a non-zero real number. We note that $\mathsf{O}(p,q)$ has four connected components characterized as follows: 
		\[\mathsf{O}^a_d(p,q):=\{g\mid \mathsf{sgn}\det(A_g)=a \text{ and } \mathsf{sgn}\det(D_g)=d\},\]
		where $a,d\in\{\pm\}.$ Moreover, using a quick computation we obtain that $\det(g)=\det(A_g)\det(D_g)^{-1}$. Hence, $\mathsf{SO}_0(p,q)=\mathsf{O}^+_+(p,q)$ and $\mathsf{SO}(p,q)=\mathsf{O}^+_+(p,q)\cup\mathsf{O}^-_-(p,q)$. (For more details please see Chapter 9 of \cite{neil} and Proposition 7.3 of \cite{gallier}.)
	\end{remark}
	
	\begin{lemma}\label{lem:oriso1}
		Any element of $\mathsf{O}(k,k)$ which fixes $[I_k,I_k]^t$ lies in $\mathsf{SO}_0(k,k)$.
	\end{lemma}
	\begin{proof}
		We start by observing that any matrix $X$ of dimension $2k\times2k$ which satisfy the equation $X[I_k,I_k]^t=[I_k,I_k]^t$ is of the following form for some matrices $A$ and $C$ of dimension $k\times k$,
		\[X=\begin{bmatrix}
			A&(I-A)\\
			(I-C)&C
		\end{bmatrix}\]
		Now we characterize such elements of $\mathsf{O}(k,k)$. We observe that $X^tI_{k,k}X=I_{k,k}$ impose the following set of constraints on $A$ and $C$:
		\begin{align*}
			A^tA-(I-C^t)(I-C)&=I,\\
			(I-A)^t(I-A)-C^tC&=-I,\\
			A^t(I-A)-(I-C^t)C&=0.
		\end{align*}
		The first and third equations give us
		$(A+C)=2I$ and replacing $(I-C)$ with $(A-I)$ in the first equation we deduce that
		$A^t+A=2I$. 
		
		Let us denote $(A^t-A)$ by $2M$. We note that $M$ is skew-symmetric. Hence, we obtain that $A=(I-M)$ and $C=(I+M)$.
		As $M$ is skew-symmetric, using the spectral theorem for skew-symmetric matrices we deduce that $\det(I-M)>0$ and $\det(I+M)>0$. Finally, using Remark \ref{rem:orient} we conclude our result. 
	\end{proof}
	
	\begin{lemma}\label{lem:oriso2}
		Suppose $k=\min\{p,q\}$ and $p\neq q$. Then any element of $\mathsf{SO}(p,q)$ which fixes $[I_k,0,I_k]^t$ lies inside $\mathsf{SO}_0(p,q)$. 
	\end{lemma}
	\begin{proof}
		Let us denote $(p-q)$ by $m$. We start by observing that any matrix $X$ of dimension $(p+q)\times(p+q)$ which satisfy the equation $X[I_k,0,I_k]^t=[I_k,0,I_k]^t$ is of the following form for some matrices $A,C$ of dimension $k\times k$, $B$ of dimension $|m|\times |m|$ and column vectors $u,v,w\in\R^k$,
		\[X=\begin{bmatrix}
			A&v&(I-A)\\
			u^t&B&-u^t\\
			(I-C)&w&C
		\end{bmatrix}\]
		Now we characterize all such elements of $\mathsf{O}(p,q)$. We observe that $X^tI_{p,q}X=I_{p,q}$ impose the following set of constraints on $A,B,C,u,v$ and $w$:
		\begin{align*}
			A^tA+\mathsf{sgn}(m)uu^t-(I-C^t)(I-C)&=I,\\
			(I-A)^t(I-A)+\mathsf{sgn}(m)uu^t-C^tC&=-I,\\
			A^t(I-A)-\mathsf{sgn}(m)uu^t-(I-C^t)C&=0,\\
			v^tv+\mathsf{sgn}(m)B^tB-w^tw&=\mathsf{sgn}(m)I,\\
			A^tv+\mathsf{sgn}(m)uB-(I-C^t)w&=0,\\
			(I-A^t)v-\mathsf{sgn}(m)uB-C^tw&=0.
		\end{align*}
		The last two equations give us $v=w$ and plugging it in the fourth equation we obtain that $B\in\mathsf{O}(|m|)$. Furthermore, from the first and third equations we obtain that
		$(A+C)=2I$ and replacing $(I-C)$ with $(A-I)$ in the first equation we deduce that
		$\mathsf{sgn}(m)uu^t=2I-A^t-A$. As $v=w$, replacing $(I-C)$ with $(A-I)$ in the fifth equation gives us $\mathsf{sgn}(m)uB+w=0$. It follows that, any $X\in\mathsf{O}(p,q)$ which fix $[I_k,0,I_k]^t$ are of the following form
		\[X=\begin{bmatrix}
			A&-\mathsf{sgn}(m)u&(I-A)\\
			u^t&I&-u^t\\
			(A-I)&-\mathsf{sgn}(m)u&(2I-A)
		\end{bmatrix}\begin{bmatrix}
			I&0&0\\
			0&B&0\\
			0&0&I
		\end{bmatrix}.\]
		We observe that
		\[\begin{bmatrix}
			I&0&-I\\
			0&I&0\\
			0&0&I
		\end{bmatrix}
		\begin{bmatrix}
			A&-\mathsf{sgn}(m)u&(I-A)\\
			u^t&I&-u^t\\
			(A-I)&-\mathsf{sgn}(m)u&(2I-A)
		\end{bmatrix}
		\begin{bmatrix}
			I&0&I\\
			0&I&0\\
			0&0&I
		\end{bmatrix}
		=
		\begin{bmatrix}
			I&0&0\\
			*&I&0\\
			*&*&I
		\end{bmatrix}.\]
		Hence, for $X\in\mathsf{SO}(p,q)$ we have $\det(B)=1$.
		
		Let us denote $(A^t-A)$ by $M$. We note that $M$ is skew-symmetric. Hence, for $p>q$ we obtain that $\det(2I-A)=\det(I+(M+uu^t)/2)$ and for $p<q$ we obtain that
		$\det(A)=\det(I-(M-uu^t)/2)$. As $M$ is skew-symmetric we have
		$\det(I+(M+uu^t)/2)=\det(I-(M-uu^t)/2).$	Moreover, as $I+uu^t/2$ is a symmetric positive definite matrix, using Cholesky factorization (see Corollary 7.2.9 of \cite{HornJohn}) we obtain the existence of a real lower triangular matrix $T$ with positive entries on the diagonal such that $I+uu^t/2=TT^t$. Hence,
		\[\det\left(I+(uu^t+M)/2\right)=\det(T)\det(I+T^{-1}(M/2)(T^t)^{-1})\det(T^t).\]
		As $T^{-1}M(T^t)^{-1}$ is skew-symmetric, using the spectral theorem for skew-symmetric matrices we deduce that $\det(I+T^{-1}(M/2)(T^t)^{-1})>0$. Finally, using Remark \ref{rem:orient} we conclude our result.
	\end{proof}

	\subsection{Maximal isotropic spaces}
	
	Let $V$ be a subspace of $\R^{p,q}$. Then $V$ is called \emph{isotropic} if and only if for all $v\in V$ we have $v^tI_{p,q}v=0$. In this subsection we list and demonstrate a few properties of maximal isotropic subspaces. These properties are used many times in the later part of this text. Although we expect these results to be well known, we were unable to find appropriate sources in the literature for them. Keeping in mind their centrality we have decided to include them.
	
	\begin{remark}
		Let $V$ be a maximal isotropic space of $\R^{p,q}$. Then there exists a maximal isotropic space $W$ which is transverse to $V$ i.e. $V^\perp\cap W=\{0\}$. Note that $V$, $W$ are spaces of dimension $k=\min\{p,q\}$ and the existence of $W$ is guaranteed by the fact that $I_{p,q}V$ is a maximal isotropic space which is transverse to $V$.
	\end{remark}
	
	\begin{notation}
		We denote the space spanned by the column vectors of a matrix $X$ by $\mathsf{cspan}(X)$.   
	\end{notation}
	
	\begin{lemma}\label{lem:stiso}
		Suppose $(V,W)$ is a pair of transverse maximal isotropic subspaces of $\R^{p,q}$, suppose $k=\min\{p,q\}$ and $J=I_{2k-1,1}$. Then 
		\begin{enumerate}
			\item for $p=q=k$, there exists $g\in\mathsf{SO}_0(p,q)$ such that either of the following holds:
			\begin{align*}
				(V,W)&=(g \mathsf{cspan}([I_k,I_k]^t),g \mathsf{cspan}([I_k,-I_k]^t)),\\
				(V,W)&=(gJ\mathsf{cspan}([I_k,I_k]^t),gJ\mathsf{cspan}([I_k,-I_k]^t)),
			\end{align*}
			\item and for $p\neq q$, there exists $g\in\mathsf{SO}_0(p,q)$ such that 
			\[(V,W)=(g\mathsf{cspan}([I_k,0,I_k]^t),g\mathsf{cspan}([I_k,0,-I_k]^t)).\]
		\end{enumerate}
	\end{lemma}
	\begin{proof}
		Let $B=\{ v_1,\dots,v_k\}$ be a basis of $V$. Let $B_j=B\setminus\{v_j\}$. Then $\dim(B_j^\perp\cap W)\geqslant1$. Choose $w_j\in (B_j^\perp\cap W)$ such that $v_j^tI_{p,q}w_j=1/2$. We note that the existence of such a $w_j$ is guaranteed by the maximality of $V$. 
		
		$\diamond$ Case $p=q=k$: We observe that $V^\perp\cap W^\perp=\{0\}$ and deduce that \[h=[v_1+w_1,\dots,v_k+w_k,v_1-w_1,\dots,v_k-w_k]\in\mathsf{O}(k,k).\]
		Hence, $h[I_k,I_k]^t=2[v_1,\dots,v_k]$ and $h[I_k,-I_k]^t=2[w_1,\dots,w_k]$. Now we use Remark \ref{rem:orient} and observe that upto a sign change in $v_1$ (consequently in $w_1$) we can make sure that $h\in\mathsf{O}^+_+(k,k)\cup\mathsf{O}^+_-(k,k)$. Now, if $h\in\mathsf{SO}_0(k,k)$, then we choose $g=h$. Otherwise, we choose $g=hJ$. We use Remark \ref{rem:orient} to observe that $g\in\mathsf{SO}_0(k,k)$, $gJ[I_k,I_k]^t=2[v_1,\dots,v_k]$ and $gJ[I_k,-I_k]^t=2[w_1,\dots,w_k]$.
		
		$\diamond$ Case $p\neq q$: As both $V$ and $W$ are maximal, any non-zero $u\in V^\perp\cap W^\perp$ satisfy $u^tI_{p,q}u\neq0$. As $u^tI_{p,q}u=(-u)^tI_{p,q}(-u)$, using continuity we obtain that either $I_{p,q}$ restricts to a positive definite form on $V^\perp\cap W^\perp$ or a negative definite one. In either case, using a Gram-Schmidt process we obtain an orthonormal basis $u_1,\dots,u_{|p-q|}$. Now we use Remark \ref{rem:orient} and observe that upto a sign change in $v_1$ (consequently in $w_1$ too) and in $u_1$,
		\[g=[v_1+w_1,\dots,v_k+w_k,u_1,\dots,u_{|p-q|},v_1-w_1,\dots,v_k-w_k]\in\mathsf{SO}_0(p,q).\]
		Hence, $V$ is spanned by the column vectors of $g[I_k,0,I_k]$ and $W$ is spanned by the column vectors of $g[I_k,0,-I_k]$.
	\end{proof}
	\begin{notation}
		Suppose $K\in\mathsf{GL}(k,\R)$. We consider the following functions:
		\[f_{k,0}(K):=
		\begin{bmatrix}
			\frac{K^{-1}+K^t}{2}&\frac{K^{-1}-K^t}{2}\\
			\frac{K^{-1}-K^t}{2}&\frac{K^{-1}+K^t}{2}
		\end{bmatrix}
		\text{ and }
		f_{k,m}(K):=\begin{bmatrix}
			\frac{K^{-1}+K^t}{2}&0&\frac{K^{-1}-K^t}{2}\\
			0&I&0\\
			\frac{K^{-1}-K^t}{2}&0&\frac{K^{-1}+K^t}{2}
		\end{bmatrix}\]
		and observe that $f_{k,m}(K)\in\mathsf{SO}(k+m,k)\cap\mathsf{SO}(k,m+k)$ for all $m\geqslant0$.	
	\end{notation}
	\begin{lemma}
		Suppose $V, V^\prime\subset\R^{k,k}$ are such that $V$ is the span of the column vectors of $[I_k,I_k]^t$ and $V^\prime$ is the span of the column vectors of $[I_k,I_{k-m,m}]^t$ with odd $m$. Then there does not exist any $g\in\mathsf{SO}(k,k)$ such that $gV=V^\prime$.
	\end{lemma}
	\begin{proof}
		We observe that $J=I_{2k-m,m}\in\mathsf{O}(k,k)$. We will prove this result by contradiction. If possible let us assume that $gV=V^\prime$ for some $g\in\mathsf{SO}(k,k)$ then $JgV=V$. Hence, $Jg[I_k,I_k]^t=[I_k,I_k]^tK$ for some invertible $k\times k$ matrix $K$. We observe that $f_{k,0}(K)Jg[I_k,I_k]^t=[I_k,I_k]^t$. Hence, using Lemma \ref{lem:oriso1} we have $f_{k,0}(K)Jg\in\mathsf{SO}_0(k,k)$. As both $f_{k,0}(K)$ and $g$ have determinant $1$, we deduce that $\det(J)=1$, a contradiction.
	\end{proof}
	
	\begin{lemma}\label{lem:maxorient}
		Any element of $\mathsf{SO}_0(p,q)$ which preserves a maximal isotropic space also preserves the orientation on it.
	\end{lemma}
	\begin{proof}
		Let $V$ be a maximal isotropic subspace of $\R^{p,q}$ and let $g\in\mathsf{SO}_0(p,q)$ be such that $gV=V$. 
		
		$\diamond$ Case $p=q=k$: We use Lemma \ref{lem:stiso} to obtain that there exists $h\in\mathsf{O}(k,k)$ such that 
		$h^{-1}gh[I_k,I_k]^t=[I_k,I_k]^tK$ for some $k\times k$ matrix $K$. We observe that $h^{-1}gh\in\mathsf{SO}_0(k,k)$. Hence, $f_{k,0}(K)h^{-1}gh\in\mathsf{SO}(k,k)$. Also, $f_{k,0}(K)h^{-1}gh[I_k,I_k]^t=[I_k,I_k]^t$. Now we use Lemma \ref{lem:oriso1} and obtain that $f_{k,0}(K)h^{-1}gh\in\mathsf{SO}_0(k,k)$. Therefore, $f_{k,0}(K)\in \mathsf{SO}_0(k,k)$ and by Remark \ref{rem:orient} it follows that $\det(K)>0$.
		
		$\diamond$ Case $p\neq q$: We denote $|p-q|$ by $m$ and $\min\{p,q\}$ by $k$. We use Lemma \ref{lem:stiso} to obtain that there exists $h\in\mathsf{O}(p,q)$ such that 
		$h^{-1}gh[I_k,0,I_k]^t=[I_k,0,I_k]^tK$ for some $k\times k$ matrix $K$. We observe that $h^{-1}gh\in\mathsf{SO}_0(p,q)$. Hence, $f_{k,m}(K)h^{-1}gh\in\mathsf{SO}(p,q)$. Also, \[f_{k,m}(K)h^{-1}gh[I_k,0,I_k]^t=[I_k,0,I_k]^t.\] 
		Now we use Lemma \ref{lem:oriso2} and obtain that $f_{k,m}(K)h^{-1}gh\in\mathsf{SO}_0(p,q)$. Therefore, $f_{k,m}(K)\in \mathsf{SO}_0(p,q)$ and by Remark \ref{rem:orient} it follows that $\det(K)>0$.
	\end{proof}
	
	\begin{lemma}\label{lem:choice}
		Let $W$ be a $(k-1)$-dimensional isotropic subspace of $\R^{k,k}$. Then $W^\perp$ contain exactly two different maximal isotropic subspaces. One of them lies in the orbit of $\mathsf{cspan}([I_k,I_k]^t)$ under the action of $\mathsf{SO}_0(n,n)$ and the other one lies in the orbit of $\mathsf{cspan}(J[I_k,I_k]^t)$ under the action of $\mathsf{SO}_0(n,n)$.
	\end{lemma}
	\begin{proof}
		As the dimension of $W$ is $(k-1)$, we can extend $W$ to a maximal isotropic subspace $V$ of $\R^{k,k}$. Clearly, $V=V^\perp\subset W^\perp$. Moreover, let $U$ be a maximal isotropic subspace of $\R^{k,k}$ transverse to $V$. We choose a basis $B=\{v_1,\dots,v_k\}$ of $V$ such that $W=\mathsf{span}(B\setminus\{v_k\})$. Moreover, suppose $\{u_1,\dots,u_k\}$ is a basis of $U$ such that $u_j\in(B\setminus\{v_j\})^\perp\cap U$ and $u_j^tI_{k,k}v_j=1$.
		Note that by construction $u_k\in W^\perp$ and $u_k^tI_{k,k}v_k=1$. Hence, $W\oplus\R u_k$ is a maximal isotropic subspace of $W^\perp$ different from $V$.
		
		Also, $W^\perp=W\oplus \R v_k\oplus \R u_k$. Hence, for any $w\in W^\perp$, there exist $a,b\in\R$ such that $w-av_k-bu_k\in W$. Moreover, if $\langle w \mid w \rangle=0$, then $ab=0$ and it follows that either $w\in V$ or $w\in W\oplus\R u_k$.
		
		Finally, for $J=I_{2k-1,1}$ we observe that
		\[h=[v_1+u_1,\dots,v_k+u_k,v_1-u_1,\dots,v_k-u_k]\in\mathsf{O}(k,k),\]
		$h[I_k,I_k]^t=2[v_1,\dots,v_{k-1},v_k]$ and $hJ[I_k,I_k]^t=2[v_1,\dots,v_{k-1},u_k]$. Moreover, up to a sign change we can choose $v_1$ (consequently $u_1$), such that $h\in\mathsf{O}^+_+(k,k)\cup\mathsf{O}^+_-(k,k)$. Finally, we conclude by noting that $h\in\mathsf{O}^+_\pm(k,k)$ if and only if $hJ\in\mathsf{O}^+_\mp(k,k)$.
	\end{proof}
	
	\section{Proper actions on $\bH$}\label{sec:specialAnosov}
	In this section we demonstrate a necessary and sufficient criterion for proper actions of a hyperbolic group $\Gamma$ on $\bH$ in terms of certain special Anosov representations. This statement is inspired from Danciger--Zhang \cite{DZ} and we provide an alternate proof. The constructions, statements and proofs in this section parallels similar constructions, statements and proofs given for affine spaces in \cite{GT}.
	
	\subsection{Proto-neutral sections}\label{sec:proto-neutral}
	In this subsection we introduce a few notions in order to state the necessary and sufficient criterion of Theorem \ref{thm1}. 
	
	We consider $\R^{2n}$ and henceforth for all vectors $v,w$ we denote $v^tI_{n,n} w$ by $\langle v\mid w\rangle$. Let $e_j\in\R^{2n}$ be the column vector whose only non-vanishing entry is the $j$-th entry and its $j$-th entry is $1$. We consider the embedding of $\R^{2n-1}$ inside $\R^{n,n}$ which is spanned by the vectors $e_1,\dots,e_{2n-1}$. We note that the quadratic form $I_{n,n}$ induces a form of signature $(n,n-1)$ on $\R^{2n-1}$. Henceforth, we denote this embedding along with the induced form by $\bR$, the group $\mathsf{SO}_0(n,n)$ by $\G$ and the subgroup of $\G$ which fixes $e_{2n}$ by $\G_0$ i.e.
	\[\G_0:=\{g\in\G\mid ge_{2n}=e_{2n}\}.\]
	We observe that $ge_{2n}=e_{2n}$ imply $g^te_{2n}=e_{2n}$. Hence, $\Go$ is a subgroup of $\mathsf{O}(n,n-1)$. Moreover, we use Remark \ref{rem:orient} to obtain that $\Go\cong\mathsf{SO}_0(n,n-1)$. We fix the following four subspaces:
	\begin{align*}
		V_\pm&:=\mathsf{span}\{e_j\pm e_{n+j}\mid j=1,\dots,n\},\\
		W_\pm&:=\mathsf{span}\{e_j\pm e_{n+j}\mid j=1,\dots,n-1\}.
	\end{align*}
	We observe that $V_\pm$ are a transverse pair of maximal isotropic subspaces of $\R^{n,n}$ and $W_\pm$ are a transverse pair of maximal isotropic subspaces of $\R^{n,n-1}$. We define $\oV_\pm$ to be the space $V_\pm$ along with the orientation coming from the ordered basis $(e_1\pm e_{n+1},\dots,e_n\pm e_{2n})$ and $\oW_\pm$ to be the space $W_\pm$ along with the orientation coming from $(e_1\pm e_{n+1},\dots,e_{n-1}\pm e_{2n-1})$. Furthermore, we define
	\[\sP^\pm\defeq\s_{\G}(\oW_\pm) \text{ and }\sP_0^\pm\defeq\s_{\Go}(\oW_\pm).\]
	\begin{remark}
		We use Lemma \ref{lem:maxorient} to deduce that $\sP_0^\pm=\s_{\Go}(W_\pm)$.
	\end{remark}
	We observe that $\sP_0^\pm=\sP^\pm\cap\Go$ and $\Go/\sP_0^\pm\subset\G/\sP^\pm$. Let $\sL_0:=\sP_0^+\cap\sP_0^-$ and $\sL:=\sP^+\cap\sP^-$. 
	\begin{remark}
		We observe that $W_+\oplus W_-\oplus\R e_n=\R^{2n-1}$. As elements of $\sL_0$ preserve orientations on $W_\pm$, we obtain that they preserve orientations on $\R e_n$ too. Also, for any $g\in\Go$ we have $\langle ge_n\mid ge_n\rangle=1$. It follows that the elements of $\sL_0$ preserve the vector $e_n$.
	\end{remark}
	We use the above Lemma and obtain a well defined map:
	\begin{align*}
		\nu:\Go/\sL_0 &\longrightarrow \R^{2n+1}\\
		[g]&\longmapsto ge_n.
	\end{align*}
	The map $\nu$ is called the \textit{neutral section}.
	\begin{remark}\label{rem:nuor}
		Any element in $\Go/\sL_0$ can be interpreted as a tuple of transverse null subspaces. The neutral section keeps track of the directional intersection of a pair of transverse null subspaces (for more details about its usage please see Remark \ref{rem:nu}). In particular, for $n=2$, let $(V,W)$ be a pair of transverse null subspaces of $\R^3$. Then $V,W$ are tangent planes to the light-cone, they touch the light-cone at two distinct lines. In fact, $V\cap W$ is also a line. Let $v\in V$, $w\in W$ be such that $v,w$ lie in the upper light-cone and let $u\in V\cap W$ be such that $[v,u,w]$ give the standard orientation on $\R^3$. Then we have  
		\[\langle u\mid u\rangle>0 \text{ and }\nu(V,W)=\frac{u}{\sqrt{\langle u\mid u\rangle}}.\]
	\end{remark}
	
	Henceforth, we fix an Euclidean norm $\|\cdot\|$ on $\R^{2n}$ and choose the following two vectors 
	\[v_+:=\frac{e_n+e_{2n}}{2}\text{ and }v_-:=\frac{e_n-e_{2n}}{2}.\]
	\begin{lemma}
		Suppose $\sL=\sP^+\cap\sP^-$. Then the elements of $\sL$ preserve the orientations on the isotropic lines $\R v_\pm$.
	\end{lemma}
	\begin{proof}
		We observe that $V_\pm=W_\pm\oplus\R v_\pm$. As elements of $\sL$ preserve the oriented space $\oW_\pm$, we deduce that they preserve orientations on $\R v_+\oplus\R v_-$. Hence, the action of $\sL$ on $\R v_+\oplus\R v_-$ is isomorphic to the action of $\mathsf{SO}(1,1)$ on $\R^{1,1}$. As the action of $\mathsf{SO}(1,1)$ on $\R^{1,1}$ preserve the individual isotropic lines, we deduce that the elements of $\sL$ preserve the lines $\R v_\pm$. Hence, the action of $\sL$ preserves $V_\pm$. Moreover, $V_\pm$ are maximal isotropic subspaces of $\R^{n,n}$. Hence, by Lemma \ref{lem:maxorient} we obtain that the action of $\sL$ preserves $\oV_\pm$. In fact, the action of $\sL$ also preserves $\oW_\pm$. It follows that the elements of $\sL$ preserves the orientations on the isotropic lines $\R v_\pm$.
	\end{proof}
	We use the above Lemma and obtain two well defined maps:
	\begin{align*}
		\nu_\pm: \G/\sL&\longrightarrow\R^{2n}\\
		[g] &\longmapsto {gv_\pm}/{\|gv_\pm\|}.
	\end{align*}
	We also observe that for all $g,h\in\G$, $\|\nu_\pm([g])\|=1$ and
	\[h\nu_\pm([g])=\frac{\|hgv_\pm\|}{\|gv_\pm\|}\nu_\pm([hg])=\left\|h\nu_\pm([g])\right\|\nu_\pm(h[g]).\]
	We call the maps $\nu^\pm$ as \textit{proto-neutral sections} as they play a role similar to the one played by the neutral section $\nu$ in \cite{GT}.
	
	\begin{remark}
		Any element in $\G/\sL$ can be interpreted as a tuple of oriented $(n-1)$-dimensional isotropic subspaces of $\R^{n,n}$ whose orthogonal spaces are transverse to each other. The intersection of these two orthogonal spaces is a space of dimension two which contains exactly two isotropic lines. The proto-neutral sections provide a consistent way of distinguishing one isotropic line from the other of the two isotropic lines (for more details about its usage please see Remark \ref{rem:protonu}).
	\end{remark}
	
	\begin{lemma}\label{lem:protoneu}
		Suppose $x,y\in\{\pm\}$ and  $g\in\G$ is such that $g\in\sP^x$ then 
		\[W_x+\R\nu_y([g])=W_x+\R v_y.\]
	\end{lemma}
	\begin{proof}
		Suppose $g\in\sP^x$. Hence, $gW_x=W_x$ and $(W_x+\R\nu_y([g]))$ is in the $\G$-orbit of $(W_x+\R v_y)$. As $W_x\subset (W_x+\R v_y)$ and $(W_x+\R v_y)$ is maximally isotropic, we have $(W_x+\R v_y)\subset W_x^\perp$. It follows that 
		\[g(W_x+\R v_y)\subset gW_x^\perp=(gW_x)^\perp=W_x^\perp.\] 
		As $g(W_x+\R v_y)$ is in the $\G$-orbit of $(W_x+\R v_y)$ and both of them are maximal isotropic subspaces of $W_x^\perp$, we use Lemma \ref{lem:choice} and conclude that $g(W_x+\R v_y)=(W_x+\R v_y)$ and our result follows.
	\end{proof}
	\begin{lemma}\label{lem:nu0}
		Suppose $x,y\in\{\pm\}$ and $g,h\in\G$ satisfy $g^{-1}h\in\sP^x$. Then
		\begin{align*}
			\langle\nu_y([g])\mid\nu_y([h])\rangle&=0,\\
			\langle\nu_y([g])\mid\nu_{-y}([h])\rangle&\neq0.
		\end{align*}
	\end{lemma}
	\begin{proof}
		As $g^{-1}h\in\sP^x$, using Lemma \ref{lem:protoneu} we deduce that both $g^{-1}hv_y$ and $v_y$ lie in the same maximal isotropic subspace $W_x\oplus\R v_y$. Hence, $\langle v_y\mid g^{-1}hv_y\rangle=0$. Moreover, $g^{-1}hv_y\in W_x\oplus\R v_y$ and $g^{-1}hv_y\notin W_x$. Hence, $g^{-1}hv_y+av_y\in W_x$ for some $a\neq0$. It follows that $\langle v_{-y}\mid g^{-1}hv_y\rangle=a\langle v_{-y}\mid v_y\rangle\neq0$.
		Therefore,
		\begin{align*}
			\langle\nu_y([g])\mid\nu_y([h])\rangle&=\frac{\langle gv_y\mid hv_y\rangle}{\|gv_y\|\|hv_y\|}=\frac{\langle v_y\mid g^{-1}hv_y\rangle}{\|gv_y\|\|hv_y\|}=0,\\
			\langle\nu_y([g])\mid\nu_{-y}([h])\rangle&=\frac{\langle gv_y\mid hv_{-y}\rangle}{\|gv_y\|\|hv_{-y}\|}=\frac{\langle v_y\mid g^{-1}hv_{-y}\rangle}{\|gv_y\|\|hv_{-y}\|}\neq0,
		\end{align*}
		and our result follows.
	\end{proof}
	
	\begin{remark}\label{rem:signu}
		We note that $I_{n,n}\in\mathsf{O}^+_+(n,n)$ for even $n$ and $I_{n,n}\in\mathsf{O}^+_-(n,n)$ for odd $n$. Hence, for even $n$ we deduce that $\nu_\pm([I_{n,n}])=\nu_\mp([I])$, 
		\[\nu_\pm([gI_{n,n}])=\frac{g\nu_\pm([I_{n,n}])}{\left\|g\nu_\pm([I_{n,n}])\right\|}=\frac{g\nu_\mp([I])}{\left\|g\nu_\mp([I])\right\|}=\nu_\mp([g]).\]	
		Moreover, $-I\in\mathsf{O}^+_+(n,n)$ for even $n$ and we have $\nu_\pm([-g])=-\nu_\pm([g])$.
	\end{remark}

	\subsection{Proto-Labourie-Margulis invariants} \label{sec:pLMinv}
	In this subsection we define the notion of diffused eigenvalues and call them proto-Labourie-Margulis invariants due to their relation with diffused Margulis invariants introduced by Labourie. 
	
	Suppose $\Gamma$ is a word hyperbolic group and $\flow$ is its Gromov flow space. We denote the period of the periodic orbit in $\flow$ corresponding to an infinite order element $\gamma\in\Gamma$ by $l(\gamma)$ and the flow invariant probability measure on this periodic orbit by $\mu_\gamma$. Now for any  $\rho\in\sHom(\Gamma,\G,\sP^\pm)$, any infinite order element $\gamma\in\Gamma$ and $t\in\R$ we have,  
	\[\nu_\pm(\xi_\rho(\gamma^+,\gamma^-,t))=\nu_\pm(\xi_\rho(\gamma^+,\gamma^-,0)).\]
	Moreover, we denote $(\gamma^+,\gamma^-,0)$ by $p_\gamma$ and denote $\nu_\pm\circ\xi_\rho$ by $\nu^\pm_\rho$. We observe that $\nu^\pm_\rho(p_\gamma)$ is an eigenvector of $\rho(\gamma)$. Indeed, we have
	\[\nu^\pm_\rho(p_\gamma)=\nu_\pm(\rho(\gamma)\xi_\rho(p_\gamma))=\frac{\rho(\gamma)\nu^\pm_\rho(p_\gamma)}{\|\rho(\gamma)\nu^\pm_\rho(p_\gamma)\|}.\]
	As $\langle\rho(\gamma) \nu^+_\rho(p_\gamma)\mid \rho(\gamma)\nu^-_\rho(p_\gamma)\rangle=\langle \nu^+_\rho(p_\gamma)\mid \nu^-_\rho(p_\gamma)\rangle$ it follows that 
	\[\|\rho(\gamma) \nu^+_\rho(p_\gamma)\|\|\rho(\gamma)\nu^-_\rho(p_\gamma)\|=1.\]
	\begin{notation}\label{not:lam}
		We denote the eigenvalues of $\rho(\gamma)$ corresponding to the eigenvectors $\nu^\pm_\rho(p_\gamma)$ by $\lambda^\pm_\rho(\gamma)$. We observe that
		\[\lambda^\pm_\rho(\gamma)=\|\rho(\gamma)\nu^\pm_\rho(p_\gamma)\| \text{ and }\lambda^+_\rho(\gamma)\lambda^-_\rho(\gamma)=1.\]
	\end{notation}
	
	Suppose $\{\rho_t\}_{t\in(-1,1)}\subset\sHom(\Gamma,\G,\sP^\pm)$ is an analytic one parameter family. We now present a partition of unity type argument to construct a family of Euclidean norms indexed by points in the Gromov flow space $\flow$ and $\sP^\pm$-Anosov representations in $\G$. Recall that $\pi:\cflow\rightarrow\flow$ is the standard projection map. 
	\begin{remark}\label{rem:norm}
		As $\flow$ is compact, there exist $\{V_i\}_{i=1}^k$ such that $V_i\subset\cflow$ are small open balls and $\cup_{i=1}^k\pi(V_i)=\flow$. Hence $\cup_{\gamma\in\Gamma}\cup_{i=1}^k\gamma V_i=\cflow$. We know from Section 8.2 of \cite{GT} that there exist maps 
		\[\{f_i:\flow\rightarrow\R^+\}_{i=1}^k\] 
		with $\mathsf{Supp}(f_i)\subset \pi(V_i)$ such that the functions $f_i$ are H\"older continuous and differentiable along flow lines with $\sum_{i=1}^kf_i=1$. We use this to construct a collection 
		\[\left\{\|\cdot\|_p^t\mid(\rho_t,p)\in\sHom(\Gamma,\G,\sP^\pm)\times\cflow\right\}\] 
		of Euclidean norms on $\R^{2n}$ indexed by $\sHom(\Gamma,\G)\times\cflow$ such that:
		\begin{enumerate}
			\item it is H\"older continuous in the variable $p\in\cflow$,
			\item it is smooth along the flow lines of $\{\phi_s\}_{s\in\R}$,
			\item it is analytic along the variable $\rho_t$,
			\item it is equivariant i.e. $\|\rho_t(\gamma)v\|_{\gamma p}^t=\|v\|_p^t$ for all $v\in\R^{2n}$ and $\gamma\in\Gamma$.
		\end{enumerate}
		We start by considering the fixed Euclidean norm $\|\cdot\|$ on $\R^{2n}$ mentioned in Section \ref{sec:proto-neutral}. We observe that for any $p\in\Gamma V_i$ there exists a unique $\gamma_{p,i}$ such that $\gamma_{p,i}p\in V_i$. Note that in such a situation $\gamma_{\eta p,i}\eta=\gamma_{p,i}$. We define
		\[\|v\|^t_{p,i}\defeq\|\rho_t(\gamma_{p,i})v\|\]
		for all $v\in\R^{2n}$ and for any $p\in\cflow$ we define:
		\[\|v\|^t_p\defeq\Sigma_{i=1}^kf_i(\pi(p))\|v\|^t_{p,i}.\]
		We check that this collection of norms are equivariant. Indeed, as
		\begin{align*}
			\|\rho_t(\gamma)v\|^t_{\gamma p}&=\Sigma_{i=1}^kf_i(\pi(\gamma p))\|\rho_t(\gamma)v\|^t_{\gamma p,i}=\Sigma_{i=1}^kf_i(\pi(p))\|\rho_t(\gamma_{\gamma p,i})\rho_t(\gamma)v\|\\
			&=\Sigma_{i=1}^kf_i(\pi(p))\|\rho_t(\gamma_{\gamma p,i}\gamma)v\|=\Sigma_{i=1}^kf_i(\pi(p))\|\rho_t(\gamma_{p,i})v\|\\
			&=\Sigma_{i=1}^kf_i(\pi(p))\|v\|^t_{p,i}=\|v\|^t_p.
		\end{align*}
		Moreover, it follows from our construction that this collection of norms satisfies all the first three conditions listed above. More details about properties of these kinds of constructions can be found in \cite{HPS} and \cite{Shub} (see also \cite{Ghosh1,Ghosh2} and \cite{BCLS}).
	\end{remark}
	
	\begin{notation}
		Henceforth, to simplify our notations we denote $\nu_\pm\circ\xi$ (resp. $\nu_\pm\circ\xi_t$) by $\nu^\pm$ (resp. $\nu^\pm_t$) and observe that by definition it is invariant under the flow $\phi$.
	\end{notation}
	
	Suppose $\xi:\cflow\to(\G/\sL)$ are the limit maps corresponding to the representations $\rho\in\sHom(\Gamma,\G,\sP^\pm)$. We recall the proto-neutral sections from Section \ref{sec:proto-neutral} and using the above collection of norms we define:
	\begin{align*}
		\sigma^\pm:\cflow&\longrightarrow\R^{2n}\\
		p&\longmapsto \nu^\pm(p)/\|\nu^\pm(p)\|_p.
	\end{align*}
	We call these maps \emph{proto-neutralised sections}. We observe that for all $p\in\cflow$, $\langle \sigma^+(p)\mid\sigma^-(p)\rangle>0$, and for $\gamma\in\Gamma$ we obtain that,
	\[\sigma^\pm(\gamma p)
	=\frac{\nu^\pm(\gamma p)}{\|\nu^\pm(\gamma p)\|_{\gamma p}}
	=\frac{\rho(\gamma)\nu^\pm(p)}{\|\rho(\gamma)\nu^\pm(p)\|_{\gamma p}}=\frac{\rho(\gamma)\nu^\pm(p)}{\|\nu^\pm(p)\|_p}=\rho(\gamma)\sigma^\pm(p).\]
	Also, we deduce that
	\[\sigma^\pm(\phi_sp)=\frac{\nu^\pm(\phi_sp)}{\|\nu^\pm(\phi_sp)\|_{\phi_sp}}=\frac{\nu^\pm(p)}{\|\nu^\pm(p)\|_{\phi_sp}}=\frac{\|\nu^\pm(p)\|_{p}\sigma^\pm(p)}{\|\nu^\pm(p)\|_{\phi_sp}}.\]
	\begin{remark}
		We recall that for $p_\gamma=(\gamma^+,\gamma^-,0)$ we have $\phi_{l(\gamma)} p_\gamma=\gamma p_\gamma$ and hence $\|\nu^\pm(p_\gamma)\|_{\phi_{l(\gamma)} p_\gamma}=\lambda^\mp(\gamma)\|\nu^\pm(p_\gamma)\|_{p_\gamma}$.	It follows that 
		\[\sigma^\pm(\gamma p_\gamma)=\lambda^\pm(\gamma)\sigma^\pm(p_\gamma).\]
	\end{remark}
	\begin{notation} 
		We consider the proto-neutralised sections and define
		\[\nabla_\phi\sigma^\pm(p)\defeq\left.\frac{\partial}{\partial s}\right|_{s=0}\sigma^\pm(\phi_sp)=-\left.\frac{\partial}{\partial s}\right|_{s=0}\log\|\nu^\pm(p)\|_{\phi_sp}\sigma^\pm(p).\]
		Furthermore, for all $p\in\cflow$ we define:
		\[f^\pm(p)\defeq\frac{\langle \nabla_\phi\sigma^\pm(p)\mid\sigma^\mp(p)\rangle}{\langle \sigma^+(p)\mid\sigma^-(p)\rangle}=-\left.\frac{\partial}{\partial s}\right|_{s=0}\log\|\nu^\pm(p)\|_{\phi_sp}.\]
	\end{notation}
	\begin{remark} \label{rem:pLMinv}
		As the action of $\Gamma$ commutes with the flow and the proto-neutralised sections are $\Gamma$-equivariant, we obtain that the functions $f^\pm$ are $\Gamma$-invariant i.e. $f^\pm(\gamma p)=f^\pm(p)$ for all $\gamma\in\Gamma$ and $p\in\cflow$. Hence, $f^\pm$ induce functions on $\flow$. We abuse notation and denote these functions by $f^\pm$ too.
	\end{remark}
	\begin{definition}
		Let $f,g:\flow \rightarrow\R$ be two H\"older continuous functions. Then $f$ is said to be Liv\v sic cohomologous to $g$ if there exists a function $h:\flow\rightarrow\R$ which is differentiable along the flow $\phi_t$ and satisfies the following property:
		\[f-g=\left.\frac{\partial}{\partial t}\right|_{t=0}h\circ\phi_t.\]	
	\end{definition}
	\begin{definition}
		The Liv\v sic cohomology classes $[f^+_t]$ (resp. $[f^-_t]$), of the functions on $\flow$ mentioned in Remark \ref{rem:pLMinv}, are called \emph{left} (resp. \emph{right}) \emph{proto-Labourie-Margulis invariants} of $\rho_t$. 
	\end{definition}
	
	\begin{proposition}\label{prop:derlminv}
		Suppose $\mu_\gamma$ is the flow invariant probability measure supported on the periodic orbit of $\flow$ corresponding to $\gamma\in\Gamma$ with period $l(\gamma)$ and $f^\pm:\flow\rightarrow\R$ is defined as above. Then 
		\[\int f^\pm d\mu_\gamma=\frac{\log\lambda^\pm(\gamma)}{l(\gamma)}.\]
	\end{proposition}
	\begin{proof}
		Suppose $p_\gamma$ belongs to the periodic orbit corresponding to $\gamma$. We deduce that 
		\[\|\nu^\pm(p_\gamma)\|_{\phi_{l(\gamma)}p_\gamma}=\|\nu^\pm(p_\gamma)\|_{\gamma p_\gamma}=\|\rho(\gamma)^{-1}\nu^\pm(p_\gamma)\|_{p_\gamma}.\]
		Hence, we obtain that
		\[\int_0^{l(\gamma)}f^\pm(\phi_sp_\gamma)ds=\log\|\nu^\pm(p_\gamma)\|_{p_\gamma}-\log\|\nu^\pm(p_\gamma)\|_{\phi_{l(\gamma)}p_\gamma}=\log\lambda^\pm(\gamma),\]
		and our result follows.
	\end{proof}
	\begin{remark}\label{rem:invcohom}
		As $\lambda^+(\gamma)\lambda^-(\gamma)=1$, we obtain that
		\[\int \frac{1}{2}(f^++f^-)d\mu_\gamma=0 \text{ and } \int \frac{1}{2}(f^+-f^-)d\mu_\gamma=\frac{\log\lambda^+(\gamma)}{l(\gamma)}\] 
		for all appropriate $\gamma\in\Gamma$. Hence, by Liv\v sic's Theorem \cite{Liv} we obtain that the functions $(f^++f^-)/2$ are Liv\v sic cohomologous to zero and the the functions $(f^+-f^-)/2$ are Liv\v sic cohomologous respectively to $f^+$.
	\end{remark}
	
	\subsection{An equivalent criterion}
	In this subsection we prove an equivalent criterion for proper actions of hyperbolic groups $\Gamma$ on $\bH$ in terms of proto-Labourie-Margulis invariants.
	\begin{lemma}\label{lem:proto}
		Suppose $p\in\cflow$ and $\sigma^\pm$ are as in Subsection \ref{sec:pLMinv}. Then there exists $h(p)\in\G$ such that $\nu^\pm(p)=\nu_\pm([h(p)])$, $h(\phi_sp)=h(p)$ and
		\[\frac{\sigma^\pm(p)}{\sqrt{2\langle\sigma^+(p)\mid\sigma^-(p)\rangle}}=\frac{1}{2}\left(\sqrt{\frac{\|h(p)v_-\|_{p}}{\|h(p)v_+\|_{p}}}\right)^{\pm1}h(p) v_\pm.\]
		Moreover, there exists some $g(p)\in\G$ such that
		\[\frac{(\sigma^+(p)-\sigma^-(p))}{\sqrt{2\langle\sigma^+(p)\mid\sigma^-(p)\rangle}}=g(p)e_{2n}.\]
	\end{lemma}
	\begin{proof}
		We observe that $\nu^\pm(p)=\nu_\pm([h(p)])=(h(p)v_\pm)/\|h(p)v_\pm)\|$ for some $h(p)\in\G$. We can choose $h(p)$ such that it does not depend on the flow. Hence, we obtain that
		\[\sigma^\pm(p)=\frac{h(p)v_\pm}{\|h(p)v_\pm\|_{p}}\text{ , }\langle\sigma^+(p)\mid\sigma^-(p)\rangle=\frac{2}{\|h(p)v_+\|_{p}\|h(p)v_-\|_{p}},\]
		and for $a=\sqrt{\|h(p)v_-\|_{p}^t/\|h(p)v_+\|_{p}^t}$ we deduce that	
		\[\frac{\sigma^\pm(p)}{\sqrt{2\langle\sigma^+(p)\mid\sigma^-(p)\rangle}}=\frac{1}{2}a^{\pm1}h(p) v_\pm.\]
		Now we can choose $h_a\in\G$ such that $h_av_\pm=a^{\pm1}v_\pm$ and consider $g(p)=h(p)h_a$ to conclude our result.
	\end{proof}
	\begin{remark} \label{rem:bh}
		As $\s_{\G}(e_{2n})=\Go$, we obtain that $\bH\defeq\G e_{2n}\cong\G/\Go$. We define
		\[\sigma:=\frac{(\sigma^+-\sigma^-)}{\sqrt{2\langle\sigma^+\mid\sigma^-\rangle}}\]
		and observe that $\sigma:\cflow\to\bH$. As $\sigma^\pm$ are $\Gamma$-equivariant, we deduce that $\sigma(\gamma p)=\rho(\gamma)\sigma(p)$ for all $p\in\cflow$.  
	\end{remark}
	
	Suppose $s\in\R$, $\gamma\in\Gamma$ and $(p,x)\in\cflow\times\bH$. We observe that $\R$ acts on $\cflow\times\bH$ by sending $(p,x)$ to $(\phi_sp,x)$ and $\Gamma$ acts on $\cflow\times\bH$ by sending $(p,x)$ to $(\gamma p,\rho(\gamma)x)$.

	\begin{lemma}\label{lem:glm}
		The $\Gamma$ action on $(\cflow\times\bH)/\R$ is proper if and only if the $\R$ action on $\Gamma\backslash(\cflow\times\bH)$ is proper.
	\end{lemma}
	\begin{proof}
		As the action of $\Gamma$ and the action of $\R$ on $\cflow$ commute with each other, we see that 
		\begin{align*}
			\gamma\phi_t(p,x)&=\gamma(\phi_tp,x)=(\gamma \phi_tp,\rho(\gamma)x)\\
			&=(\phi_t\gamma p,\rho(\gamma)x)=\phi_t(\gamma p,\rho(\gamma)x)=\phi_t\gamma(p,x).
		\end{align*}
		Now we use Lemma 5.2 of \cite{GLM} (See also Lemma 3.1 of \cite{Ben}) to conclude our result.
	\end{proof}
	\begin{lemma}[Goldman--Labourie \cite{GL}]\label{lem:gl3}
		Suppose $f:\flow\to\R$ is a H\"older continuous function such that
		\[\int f d\mu>0\]
		for all $\phi$ invariant measures $\mu$ on $\flow$. Then $f$ is Liv\v sic cohomologous to a positive function.
	\end{lemma}
	\begin{proof}
		The proof follows verbatim the proof given in the proof of Lemma 3 of \cite{GL} once we replace all the appearances of a manifold by the compact metric space $\flow$.
	\end{proof}
	
	\begin{proposition}\label{prop:p1}
		Suppose $\rho\in\sHom(\Gamma,\G,\sP^\pm)$ and the action of $\rho(\Gamma)$ on $\bH$ is proper. Then the proto-Labourie-Margulis invariants of $\rho$ are non-vanishing.
	\end{proposition}
	
	\begin{proof}
		Let us assume on contrary that the action of $\rho(\Gamma)$ on $\bH$ is proper but the proto-Labourie-Margulis invariants $f^\pm$ of $\rho$ are not non-vanishing. Then neither $f^+$ nor $f^-$ is Liv\v sic cohomologous to a strictly positive function. Hence using Lemma \ref{lem:gl3} and the fact that $(f^++f^-)/2$ is Liv\v sic cohomologous to $0$, we get that there exists a flow invariant probability measure $\mu$ such that 
		\[\int \frac{1}{2}(f^+-f^-)d\mu=0.\] 
		Suppose $T>0$. We consider 
		\[f_T(p)\defeq\frac{1}{T}\int_0^T\frac{1}{2}(f^+-f^-)(\phi_sp)ds\] 
		and observe that $\int f_Td\mu=0$. Hence for all $T>0$ there exists $p_T\in\flow$ such that $f_T(p_T)=0$. It follows that
		\[\log\left(\frac{\|\nu^+(p_T)\|_{p_T}}{\|\nu^+(p_T)\|_{\phi_Tp_T}}\right)-\log\left(\frac{\|\nu^-(p_T)\|_{p_T}}{\|\nu^-(p_T)\|_{\phi_Tp_T}}\right)=0.\]
		We use Lemma \ref{lem:proto} to obtain that 
		\[\frac{\|\nu^\pm(p_T)\|_{p_T}}{\|\nu^\pm(p_T)\|_{\phi_Tp_T}}=\frac{\|h(p_T)v_\pm\|_{p_T}}{\|h(p_T)v_\pm\|_{\phi_Tp_T}}\]
		and hence for all $T>0$, we have $\sigma_\rho(\phi_Tp_T)=\sigma_\rho(p_T)$.
		Moreover, $\flow$ is compact. It follows that $\R$ does not act properly on $\Gamma\backslash(\cflow\times\bH)$. Now we use Lemma \ref{lem:glm} to get that     $\Gamma$ does not act properly on $(\cflow\times\bH)/\R=\bdry^{(2)}\times\bH$ and hence $\Gamma$ does not act properly on $\bH$ a contradiction.
	\end{proof}
	\begin{proposition}\label{prop:p2}
		If $\rho\in\sHom(\Gamma,\G,\sP^\pm)$ is such that the proto-Labourie-Margulis invariants of $\rho$ are non-vanishing. Then the action of $\rho(\Gamma)$ on $\bH$ is proper.
	\end{proposition}
	\begin{proof}
		Suppose the action of $\rho(\Gamma)$ on $\bH$ is not proper. Hence, there exist $\gamma_k\in\Gamma$ going to infinity and $x_k\in\bH$ such that the sequence $\{x_k\}$ converge to some $x\in\bH$ and $\{y_k:=\rho(\gamma_k)x_k\}$ converge to some $y\in\bH$. As $\rho(\Gamma)$ is Anosov with respect to $\sP^\pm$, we use Theorem 1.7 of \cite{GW2} and Remark \ref{rem:nonparano} to get that $\rho(\Gamma)$ is AMS proximal. Hence without loss of generality we can assume that $\gamma_k$ is of infinite order for all $k$ and $\lim_{k\to\infty}\gamma^+_k\neq\lim_{k\to\infty}\gamma^-_k$ (please see Section 8.1 of \cite{GT} for a more detailed version of this argument). Therefore, we obtain that the flowlines corresponding to $\gamma_k$ converge to a flowline with boundary points $\lim_{k\to\infty}\gamma^\pm_k$ and
		\[\lim_{k\to\infty}l(\gamma_k)=\infty.\]
		We choose $q_k\in\cflow$ such that $q_k$ is a point on the flowline corresponding to $\gamma_k$ and $q_k$ converges to some $q\in\cflow$. For $z\in\{x,y\}$ suppose  $w^\pm_{z,k}\in\xi^\pm(\gamma_k^\pm)$ and $v^\pm_{z,k}\in\R\nu^\pm(q_k)$ are such that 
		\[z_{k}=w^+_{z,k}+w^-_{z,k}+v^+_{z,k}+v^-_{z,k}.\] 
		As $\lim_{k\to\infty} z_k=z$, we obtain that $\lim_{k\to\infty} w^\pm_{z,k}=w_z^\pm$ and $\lim_{k\to\infty}v^\pm_{z,k}=v_z^\pm$ for some finite $w_z^\pm$ and $v_z^\pm$. Hence, for $z\in\{x,y\}$ it follows that, 
		\begin{align*}
			\langle z\mid \nu^+(q)\rangle\langle z\mid \nu^-(q)\rangle&=\langle v_z^+\mid v_z^-\rangle\langle \nu^+(q)\mid \nu^-(q)\rangle,\\
			\langle w_z^+\mid w_z^-\rangle + \langle v_z^+\mid v_z^-\rangle&=\langle z\mid z\rangle=-1.
		\end{align*}
		Moreover, as $\rho$ is $\sP^\pm$-Anosov, we deduce that $w_x^+=0$ and $w_y^-=0$. Hence,
		\[\langle x\mid \nu^+(q)\rangle\langle x\mid \nu^-(q)\rangle=-\langle \nu^+(q)\mid \nu^-(q)\rangle=\langle y\mid \nu^+(q)\rangle\langle y\mid \nu^-(q)\rangle.\]
		We also observe that
		\begin{align*}
			\lim_{k\to\infty}\lambda^{\pm}(\gamma_k)&=\lim_{k\to\infty}\frac{\langle x_k\mid \nu^\pm(q_k)\rangle}{\langle x_k\mid \rho(\gamma_k)^{-1}\nu^\pm(q_k)\rangle}\\
			&=\lim_{k\to\infty}\frac{\langle x_k\mid \nu^\pm(q_k)\rangle}{\langle \rho(\gamma_k)x_k\mid \nu^\pm(q_k)\rangle}=\frac{\langle x\mid \nu^\pm(q)\rangle}{\langle y\mid \nu^\pm(q)\rangle}.
		\end{align*}
		Therefore, we deduce that $\lim_{k\to\infty}\lambda^\pm(\gamma_k)$ are finite nonzero numbers. Suppose $f^\pm$ are the proto-Labourie-Margulis invariants of $\rho$. As $\lim_{k\to\infty}l(\gamma_k)=\infty$, it follows that
		\[\lim_{k\to\infty}\int_{\gamma_k} f^+ =\lim_{k\to\infty}\frac{\log\lambda^+(\gamma_k)}{l(\gamma_k)} =0.\] 
		We also know that the space of flow invariant probability measures on $\flow$ is weak* compact.  Hence, there exists a flow invariant probability measure $\mu$ on $\flow$ such that $\int f^+d\mu=0$. It follows that $f^+$ is not Liv\v sic cohomologous to any non-vanishing function and we conclude our result. 
	\end{proof}
	
	\subsection{Special Anosov representations}
	In this subsection we relate the existence of non-vanishing proto-Labourie-Margulis invariants with certain special Anosov representations in $\G$.
	\begin{notation}
		Suppose $f:\flow\to\R$. We define,
		\[I_{a,b}(f)(p):=\frac{1}{b-a}\int_a^bf(\phi_sp)ds\]
	\end{notation}
	\begin{lemma}[Goldman--Labourie \cite{GL}]\label{lem:gl7}
		Let $f:\flow\to\R$ be a H\" older continuous function which is differentiable along the flow lines of $\phi$ and 
		\[\int f d\mu>0\]
		for all $\phi$ invariant measure $\mu$. Then $I_{0,T}(f)>0$ for some $T>0$.
	\end{lemma}
	\begin{proof}
		The proof follows verbatim the proof given in the proof of Lemma 7 of \cite{GL} once we replace all the appearances of a manifold by the compact metric space $\flow$.
	\end{proof}
	
	\begin{lemma}\label{lem:fano}
		Suppose $\rho\in\sHom(\Gamma,\G,\sP^\pm)$ and $[f^\pm]$ are the proto-Labourie-Margulis invariants of $\rho$. Also, suppose $f^+$ is Liv\v sic cohomologous to a strictly positive function. Then there exist positive constants $C$ and $k$ such that for all $t>0$ and $p\in\cflow$ the following hold:
		\begin{align*}
			\|\nu^+(p)\|_{\phi_tp}&\leqslant C\exp(-kt)\|\nu^+(p)\|_p,\\
			\|\nu^-(p)\|_{\phi_{-t}p}&\leqslant C\exp(-kt)\|\nu^-(p)\|_p.
		\end{align*}
	\end{lemma}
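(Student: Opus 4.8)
The plan is to integrate the infinitesimal identity (2) and then feed in the cohomology hypothesis together with compactness of $\flow$.

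First I would observe that the limit map factors through $\bdry^{(2)}$, so $\nu^\pm$ is constant along flow lines: $\nu^\pm(\phi_sp)=\nu^\pm(p)$ for all $s$. Applying property (2) at the base point $\phi_{s_0}p$ and using this invariance gives, for every $s_0\in\R$,
\[\frac{d}{ds}\Big|_{s=s_0}\log\|\nu^+(p)\|_{\phi_sp}=-f^+(\phi_{s_0}p),\]
and integrating from $0$ to $t$ yields the key formula
\[\log\frac{\|\nu^+(p)\|_{\phi_tp}}{\|\nu^+(p)\|_p}=-\int_0^t f^+(\phi_sp)\,ds.\]

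Next, by hypothesis $f^+$ is Liv\v sic cohomologous to a strictly positive (continuous) function $g$ on $\flow$; since $\flow$ is compact there is $\delta>0$ with $g\geqslant\delta$, and there is a continuous function $h$ on $\flow$, differentiable along $\phi$, with $f^+(q)-g(q)=\frac{\partial}{\partial s}|_{s=0}h(\phi_sq)$. Evaluating at $q=\phi_sp$ and integrating gives $\int_0^t f^+(\phi_sp)\,ds=\int_0^t g(\phi_sp)\,ds+h(\phi_tp)-h(p)\geqslant\delta t-2\|h\|_\infty$. Combined with the key formula this yields $\|\nu^+(p)\|_{\phi_tp}\leqslant e^{2\|h\|_\infty}e^{-\delta t}\|\nu^+(p)\|_p$, which is the first estimate with $C=e^{2\|h\|_\infty}$ and $k=\delta$. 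For the second estimate I would use the fact recorded just before the statement, that $f^++f^-$ is Liv\v sic cohomologous to $0$; hence $f^-$ is Liv\v sic cohomologous to $-g$, a strictly negative function. Running the same computation in negative time — after the substitution $s\mapsto-s$ one gets $\log\big(\|\nu^-(p)\|_{\phi_{-t}p}/\|\nu^-(p)\|_p\big)=\int_0^t f^-(\phi_{-s}p)\,ds$ — and inserting $f^-=-g+\frac{\partial}{\partial s}|_{s=0}h'(\phi_s\cdot)$ gives an upper bound $-\delta t+2\|h'\|_\infty$, hence $\|\nu^-(p)\|_{\phi_{-t}p}\leqslant e^{2\|h'\|_\infty}e^{-\delta t}\|\nu^-(p)\|_p$. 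Taking $C$ to be the larger of the two constants and $k=\delta$ finishes the proof.

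I expect the only delicate points to be the sign and time-direction bookkeeping when passing from the pointwise identity (2) to the integrated form — in particular one must use flow-invariance of $\nu^\pm$ to differentiate $\|\nu^\pm(p)\|_{\phi_sp}$ at a general time, not just at $s=0$ — and ensuring the coboundary terms $h(\phi_{\pm t}p)-h(p)$ are uniformly bounded, which is exactly where compactness of $\flow$ is used. Everything else is a routine one-variable estimate.
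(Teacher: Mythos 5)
Your argument is correct and reaches the same integrated inequality the paper needs, but you extract the key lower bound on the Birkhoff integral differently. Both proofs start from the identity $\log\bigl(\|\nu^+(p)\|_{\phi_tp}/\|\nu^+(p)\|_p\bigr)=-\int_0^t f^+(\phi_sp)\,ds$ (your derivation via flow-invariance of $\nu^\pm$ is the right justification, and it is the same computation the paper performs when integrating over closed orbits). From there you plug in the coboundary directly: writing $f^+=g+\partial_s|_{s=0}h(\phi_s\cdot)$ with $g\geqslant\delta>0$, the transfer term telescopes to $h(\phi_tp)-h(p)$ and is absorbed into the constant $C=e^{2\|h\|_\infty}$, giving $k=\delta$. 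The paper instead invokes Lemma 7 of \cite{GL} to produce a $c>0$ for which the time-$c$ average $f^+_c$ is strictly positive, uses compactness of $\flow$ to bound it below by $k^+>0$, and then sums over blocks of length $c$ plus a uniformly controlled remainder; this buys the estimate without ever touching the transfer function, at the cost of quoting the averaging lemma. Your route is more elementary and gives cleaner constants, but note one point of bookkeeping: the paper's definition of Liv\v sic cohomology only asks that $h$ be differentiable along flow lines, not that it be continuous, so the bound $|h(\phi_tp)-h(p)|\leqslant 2\|h\|_\infty$ requires you to either take $h$ continuous (hence bounded on the compact space $\flow$, which is the standard convention in \cite{GLM} and \cite{GT} and surely the intended one here) or to fall back on the averaged-function argument as the paper does. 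Your treatment of the second inequality, using that $f^++f^-$ is Liv\v sic cohomologous to zero and running the estimate in negative time, matches the paper's "similar computation" for $-f^-$ and the sign and time-direction bookkeeping you carry out is correct.
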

	\begin{proof}
		Suppose $f^+$ is Liv\v sic cohomologous to a strictly positive function. Hence $-f^-$ is also Liv\v sic cohomologous to a strictly positive function. Then using Lemma \ref{lem:gl7} we get that there exists $c>0$ such that $I_{0,c}(f^+)>0$. Now as $\flow$ is compact we get that $I_{0,c}(f^+)>k^+>0$ for some $k^+$. Hence for $t>0$ and integer $n$ such that $(n+1)>(t/c)\geqslant n$ we have
		\[tI_{0,t}(f^+)(p)\geqslant(t-nc)I_{0,t-nc}(f^+)(p)+nck^+\]
		As both $[0,c]$ and $\flow$ are compact, we obtain that
		\[\min\{I_{0,t}(f^+)(p)\mid (t,p)\in[0,c]\times\flow\}> k^\prime\text{ for some } k^\prime<0.\]
		As $(t-nc)\in[0,c]$, we have $I_{0,t-nc}(f^+)(p)\geqslant k^\prime$.
		Hence, 
		\[tI_{0,t}(f^+)(p)\geqslant (t-nc)k^\prime+nck^+.\]
		As $k^\prime<0$, we obtain that $(t-nc)k^\prime>ck^\prime$. We denote $\exp(k^+c-k^\prime c)$ by $C^+$ and recalling the definition of $f^+$ deduce that 
		\[\|\nu^+(p)\|_{\phi_tp}\leqslant C^+\exp(-k^+t)\|\nu^+(p)\|_p\]
		for all $t>0$ and for all $p\in\cflow$.	As $-f^-$ is also Liv\v sic cohomologous to a strictly positive function, we can do a similar computation to obtain positive constants $C^-$ and $k^-$ such that 
		\[\|\nu^-(p)\|_{\phi_{-t}p}\leqslant C^-\exp(-k^-t)\|\nu^-(p)\|_p\]
		for all $t>0$ and for all $p\in\cflow$.	Finally, we choose $C\defeq\max\{C^+,C^-\}$, $k\defeq\min\{k^+,k^-\}$ and conclude our result.
	\end{proof}
	
	\begin{notation}
		We denote $\mathsf{SL}(2n,\R)$ by $\Gl$ and the stabilizer of the oriented $n$-dimensional subspace $V_\pm$ inside $\Gl$ by $\sQ^\pm$ i.e.
		\[\sQ^\pm:=\s_{\Gl}\left(\oV_\pm\right).\]
		Also, we consider the orientation on $\R\nu^\pm(p)$ coming from the direction of the vector $\nu^\pm(p)$ to be positive and denote this oriented line by $\overrightarrow{\R}\nu^\pm(p)$.
	\end{notation}
	
	\begin{proposition}\label{prop:meano1}
		Suppose $\rho\in\sHom(\Gamma,\G,\sP^\pm)$ and the left proto-Labourie-Margulis invariant $[f^+]$ is positive. Then $\rho\in\sHom(\Gamma,\Gl,\sQ^\pm)$. 
	\end{proposition}

	\begin{proof}
		Suppose $p=(p_+,p_-,t)\in\cflow$ and $\xi^\pm:\bdry\to\G/\sP^\pm$ are the limit maps of $\rho\in\sHom(\Gamma,\G,\sP^\pm)$. We define 	 
		\begin{align*}
			\eta^\pm:\bdry&\longrightarrow\Gl/\sQ^\pm\\
			p_\pm&\longmapsto \xi^\pm(p_\pm)\oplus\oR\nu^\pm(p).
		\end{align*}
		and let $\eta(p):=(\eta^+(p_+),\eta^-(p_-))$. We use Lemmas \ref{lem:protoneu} and \ref{lem:nu0} to observe that $\eta^\pm$ is well defined and 
		\[\eta(\cflow)\subset \Gl/(\sQ^+\cap\sQ^-).\]
		Moreover, using properties of $\nu^\pm$ we obtain that $\eta^\pm$ are $\rho(\Gamma)$-equivariant. Therefore, to show that $\rho\in\sHom(\Gamma,\Gl,\sQ^\pm)$ we only need to produce equivariant metrics with contraction properties on: 
		\[\sT_{\eta(p)}(\Gl/(\sQ^+\cap\sQ^-))\cong\sT_{\eta^+(p_+)}(\Gl/\sQ^+)\oplus\sT_{\eta^+(p_+)}(\Gl/\sQ^+).\]
		As the underlying subspaces of $\eta^\pm(p_\pm)$ add up to $\R^{2n}$, we use Proposition 10.1 of \cite{Voi1} to obtain that
		\[\sT_{\eta^\pm(p_\pm)}(\Gl/\sQ^\pm)\cong\sHom(\eta^\pm(p_\pm),\eta^\mp(p_\mp)).\]
		We use Corollary 3.3 of \cite{GT} and Lemma \ref{lem:fano} to obtain equivariant metrics with contraction properties on $\eta^\pm(p_\pm)=\xi^\pm(p_\pm)\oplus\oR\nu^\pm(p)$. Indeed, we use the following recipe: for $v\in\xi^\pm(p_\pm)$ and $w\in\R\nu^\pm(p)$ we define,
		\[\|(v,w)\|_p^2:=\|v\|_p^2+\|w\|_p^2.\] 
		Now for $A\in\sHom(\eta^\pm(p_\pm),\eta^\mp(p_\mp))$ we define
		\[\|A\|_p=\sup_{v\neq0}\frac{\|A(v,w)\|_p}{\|(v,w)\|_p}.\]
		As $\rho\in\sHom(\Gamma,\G,\sP^\pm)$ and the left proto-Labourie-Margulis invariant is positive, we obtain that $\|(v,w)\|_{\phi_{\pm t}p}\leqslant Ce^{-kt}\|(v,w)\|_p$ and $\|A(v,w)\|_{\phi_{\mp t}p}\leqslant Ce^{-kt}\|A(v,w)\|_p$ for all $t>0$. Hence, for all $v\neq0$ we deduce that
		\[\frac{\|A(v,w)\|_{\phi_{\mp t}p}}{\|(v,w)\|_{\phi_{\mp t}p}}\leqslant C^2e^{-2kt}\frac{\|A(v,w)\|_p}{\|(v,w)\|_p}\leqslant C^2e^{-2kt}\|A\|_p\]
		and take the supremum over all  $v\neq0$ in the left hand side to conclude that $\|A\|_{\phi_{\mp t}p}\leqslant C^2e^{-2kt}\|A\|_p$ for all $t>0$.
	\end{proof}
	
	\begin{proposition}\label{prop:meano2}
		Suppose $\rho\in\sHom(\Gamma,\G,\sP^\pm)\cap\sHom(\Gamma,\Gl,\sQ^\pm)$. Then the left proto-Labourie-Margulis invariant $[f^+]$ is positive. 
	\end{proposition}
	
	\begin{proof}
		Suppose $\xi^\pm:\bdry\to\G/\sP^\pm$ and $\eta^\pm:\bdry\to\Gl/\sQ^\pm$ are the two pairs of limit maps.
		As $\rho(\Gamma)\subset\G$ and the action of $\rho(\gamma)$ (resp. $\rho(\gamma)^{-1}$) is contracting on $\eta^+(\gamma^+)$ (resp. $\eta^-(\gamma^-)$), we obtain that $\eta^\pm(\gamma^\pm)$ are isotropic subspaces of $R^{n,n}$. As $\eta^\pm(\gamma^\pm)$ are of dimension $n$, it follows that they are maximally isotropic. Hence by continuity of the limit maps we deduce that the image of $\eta^\pm$ lies inside the space of all maximal isotropic subspaces of $\R^{n,n}$. Also, as the action of $\rho(\gamma)$ (resp. $\rho(\gamma)^{-1}$) is contracting on $\xi^+(\gamma^+)$ (resp. $\xi^-(\gamma^-)$), we obtain that $\xi^\pm(\gamma^\pm)\subset\eta^\pm(\gamma^\pm)$.
		Now using the orientation on $\eta^\pm(p_\pm)$ and the orientations already on $\xi^\pm(p_\pm)$ we get positive orientations on $\eta^\pm(p_\pm)\cap(\xi^\mp(p_\mp))^\perp$.
		As $\eta^\pm(\gamma^\pm)$ are respectively in the orbits of $V_\pm$, we use Lemmas \ref{lem:maxorient} and \ref{lem:choice} to obtain that
		\[\eta^\pm(p_\pm)\cap(\xi^\mp(p_\mp))^\perp=\oR\nu^\pm(p).\]
		
		Moreover, we know that the contraction property of an Anosov representation does not depend on a particular choice of norms up to H\"older equivalence. Hence we can choose the collection of norms to be smooth along flow lines (please see Remark \ref{rem:norm}) and the contraction property would still hold. Now using Lemma 5.3 of \cite{BCLS} we can choose $C=1$. Hence, we get that there exists a positive constant $k$ and a collection $\{\|\cdot\|_p\mid p\in\cflow\}$ of Euclidean norms on $\R^{2n}$ such that:
		\begin{enumerate}
			\item it is H\"older continuous in the variable $p\in\cflow$,
			\item it is smooth along the flow lines of $\{\phi_s\}_{s\in\R}$,
			\item it is equivariant i.e. $\|\rho(\gamma)v\|_{\gamma p}=\|v\|_p$ for all $v\in\R^{2n}$ and $\gamma\in\Gamma$,
			\item it is contracting i.e for all $p\in\cflow$, $v\in\eta^+(p_+)$, $w\in\eta^-(p_-)$ and $s>0$:
			\[\frac{\|v\|_{\phi_sp}}{\|w\|_{\phi_sp}}\leqslant \exp(-ks)\frac{\|v\|_p}{\|w\|_p}.\]
		\end{enumerate}
		Therefore, for all $p\in\cflow$ we obtain
		\begin{align*}
			g(p)\defeq\left.\frac{\partial}{\partial s}\right|_{s=0}\log\frac{\|\nu^+(p)\|_{\phi_sp}}{\|\nu^-(p)\|_{\phi_sp}}&=\lim_{s\to 0} \frac{1}{s}\log\left(\frac{\|\nu^+(p)\|_{\phi_sp}}{\|\nu^-(p)\|_{\phi_sp}}\frac{\|\nu^-(p)\|_p}{\|\nu^+(p)\|_p}\right)\\
			&\leqslant \lim_{s\to 0} \frac{1}{s}\log\left(\exp(-ks)\right)= -k.
		\end{align*}
		Hence for all flow invariant probability measure $\mu$ on $\flow$ we get that
		\[\int\frac{1}{2}(f^+-f^-)d\mu=-\int g d\mu \geqslant k >0.\]
		Finally, using Remark \ref{rem:invcohom} we conclude our result.
	\end{proof}
	\begin{remark}\label{rem:unorien}
		Suppose $\rho\in\sHom(\Gamma,\Gl)$. We observe that 
		\[J\sQ^\pm J=\s_\G(J\oV_\pm).\]
		\begin{enumerate}
			\item If $\rho\in\sHom(\Gamma,\G,\sP^\pm)$, then we use Lemmas \ref{lem:maxorient} and \ref{lem:choice} to deduce that the left proto-Labourie-Margulis invariant $[f^+]$ is negative if and only if $\rho\in\sHom(\Gamma,\Gl,J\sQ^\pm J)$.
			\item If $\rho(\Gamma)\subset\G$, then by Lemma \ref{lem:maxorient} we deduce that $\rho\in\sHom(\Gamma,\Gl,\sQ^\pm)$ (resp. $\sHom(\Gamma,\Gl,J\sQ^\pm J)$) if and only if $\rho$ is Anosov with respect to the stabilizers of $V_\pm$ (resp. $JV_\pm$). 
			\item If $\rho$ is Anosov in $\Gl$ with respect to the stabilizers of a pair of transverse $n$ dimensional subspaces and $\rho(\Gamma)\subset\G$, then we use Lemma \ref{lem:choice} to deduce that either $\rho$ is Anosov with respect to the stabilizers of $V_\pm$ or $\rho$ is Anosov with respect to the stabilizers of $JV_\pm$.
		\end{enumerate}
	\end{remark}
	
	\begin{theorem}\label{thm:p}
		Suppose $\rho\in\sHom(\Gamma,\G,\sP^\pm)$. Then the action of $\rho(\Gamma)$ on $\bH$ is proper if and only if $\rho$ is Anosov in $\Gl$ with respect to the stabilizer of an $n$-dimensional subspace. 
	\end{theorem}
	\begin{proof}
		We use Propositions \ref{prop:p1}, \ref{prop:p2}, \ref{prop:meano1}, \ref{prop:meano2} and Remark \ref{rem:unorien} to conclude our result.
	\end{proof}

	\section{Infinitesimal proper actions}
	In this section we show that proper affine actions on $\R^{n,n-1}$ can be seen as infinitesimal versions of proper actions on $\bH$. We do this by recalling the notion of affine Anosov representations from \cite{GT} and relating them with the special Anosov representations appearing in Section \ref{sec:specialAnosov}.
	
	\subsection{Affine Anosov representations} \label{sec:aar}
	
	In this subsection we recall the notion of an affine Anosov representation. Suppose $\Ga:=\Go\ltimes\R^{2n-1}$. We call an element $h\in\Go$ \emph{pseudo-hyperbolic} if the unit eigenspace of $h$ is one dimensional and $h$ does not have $-1$ as an eigenvalue. We call an element $g=(h,u)\in\Ga$ \emph{pseudo-hyperbolic} if its linear part $h$ is pseudo-hyperbolic.
	
	We call an $n$ dimensional subspace $V\subset\bR$ a \textit{null space} if $(V^\perp)\cap\R^{2n-1}$ is a maximal isotropic subspace of $\bR$. Affine subspaces which are parallel to null spaces are called \textit{affine null spaces}. We recall Lemmas \ref{lem:stiso} and \ref{lem:maxorient} and note that we can consistently provide the maximal isotropic spaces and the null spaces with positive orientations. 
	
	Now suppose $g=(h,u)\in\Ga$ is such that $h$ is pseudo-hyperbolic. Let
	\[W^h_\pm:=\left\{v\mid \lim_{k\to\infty}h^{\mp k}v=0\right\}\subset \R^{2n-1}.\]
	We note that $W^h_\pm$ are maximal isotropic subspaces of $\bR$ and let $v^h_0$ be the unique eigenvector of $h$ with eigenvalue $1$ such that $\langle v_0^h\mid v_0^h\rangle=1$ and which is positively oriented with respect to the orientations on $W^h_+$ and $(W^h_+)^\perp\cap\bR$ (for more details please see \cite{AMS}). Then the \emph{Margulis invariant} of $g$ is defined as:
	\[\alpha(g):=\langle u\mid v^h_0\rangle .\]
	
	Let $\Gamma$ be a word hyperbolic group and let $(\rho,{u}):\Gamma\rightarrow\Ga$
	be an injective homomorphism such that the linear part $\rho$ is $\sP_0^\pm$-Anosov with limit maps given by 
	\[\xi_\rho^\pm:\bdry\rightarrow\Go/\sP_0^\pm\] 
	and $\xi_\rho(p)=(\xi_\rho^+(p_+),\xi_\rho^-(p_-))$ for all $p=(p_+,p_-,t)\in\cflow$.
	\begin{remark}\label{rem:nu}
		Suppose $(\rho,{u}):\Gamma\rightarrow\Ga$ is an injective homomorphism such that $\rho\in\sHom(\Gamma,\G,\sP_0^\pm)$ and $\gamma^\pm\in\bdry$ are respectively the attracting and repelling points of the action of any infinite order element $\gamma\in\Gamma$. We observe that $\xi_\rho(\gamma^+,\gamma^-,t)$ is independent of $t$. Henceforth, we denote $\nu(\xi_\rho(\gamma^+,\gamma^-,t))$ by $\nu_\rho(\gamma^+,\gamma^-)$ and observe that $\nu_\rho(\gamma^+,\gamma^-)=v_0^{\rho(\gamma)}$. It follows that
		\[\alpha((\rho,u)(\gamma))=\langle {u}_\rho(\gamma)\mid\nu_\rho(\gamma^+,\gamma^-)\rangle.\]
	\end{remark}
	
	\begin{definition}\label{def:LMinv}
		Let $\Gamma$ be a word hyperbolic group and let $(\rho,{u})$ be an injective homomorphism from $\Gamma$ to $\Ga$ such that $\rho\in\sHom(\Gamma,\Go,\sP_0^\pm)$. Then the \emph{Labourie-Margulis invariant} of this representation is a Liv\v sic cohomology class $[f_{\rho,u}]$ of H\"older continuous functions $f_{\rho,u}$ such that
		\[\int f_{\rho,u}d\mu_\gamma=\frac{\alpha(\rho,u)(\gamma)}{l(\gamma)}\]
		where $\mu_\gamma$ is a flow invariant probability measure supported on the periodic orbit of $\flow$ corresponding to $\gamma$ and $l(\gamma)$ is the period of this orbit.
	\end{definition}
	\begin{remark}
		The existence of the Labourie-Margulis invariants follow from the constructions in Appendix 8.2 and Lemma 7.2 of \cite{GT}. Moreover, the uniqueness follows from Liv\v sic's theorem \cite{Liv}.
	\end{remark}
	Now we define the notion of affine Anosov representations. Suppose $W_\pm\subset\R^{2n-1}$ be as in the previous section. We observe that 
	\[\s_{\Go}(W_\pm)=\s_{\Go}(W_\pm^\perp\cap\R^{2n-1}).\]
	Henceforth, in this section we treat $(W_\pm^\perp\cap\R^{2n-1})$ as affine subspaces in $\R^{2n-1}$ and call the stabilizers,
	\[\sP_a^\pm\defeq\s_{\Ga}(W_\pm^\perp\cap\R^{2n-1}),\] 
	of these subspaces under the action of the affine group $\Ga$ as \emph{pseudo parabolic} subgroups. These subgroups of $\Ga$ are used in the definition of an affine Anosov representation in the same way as parabolic subgroups are used in the definition of an Anosov representation. We observe that for $\sL_0$ and $e_n$ as defined in the previous section we have \[\sL_a\defeq\sP_a^+\cap\sP_a^-=\sL_0\ltimes\R e_n.\] 
	Let $\cX_a$ be the space of all affine null subspaces in $\R^{2n-1}$ and let $\cY_a$ be the space of all transverse pairs of affine null subspaces. Then $\cY_a$ is an open and dense subset of $\cX_a\times\cX_a$. The group $\Ga$ acts transitively on the space $\cX_a$ and we have $\cX_a\cong\Ga/\sP_a^\pm$. Moreover, the diagonal action of $\Ga$ is transitive on $\cY_a$ and we have $\cY_a\cong\Ga/\sL_a$.
	\begin{definition}
		Let $(\rho,{u}):\Gamma\rightarrow\Ga$ be an injective homomorphism. Then $(\rho,{u})$ is called affine Anosov with respect to $\sP_a^\pm$ if and only if the following conditions hold:
		\begin{enumerate}
			\item \label{cond:1}
			\begin{enumerate}
				\item There exist a continuous, injective, $(\rho,{u})(\Gamma)$-equivariant limit maps
				$\xi^\pm:\bdry\rightarrow\Ga/\sP_a^\pm$	such that $\xi(p)\defeq(\xi^+(p_+),\xi^-(p_-))\in\Ga/\sL_a$ for $p=(p_+,p_-,t)\in\cflow$ .
				\item There exist positive constants $C,c$ and for $p\in\cflow$ a continuous collection of $(\rho,{u})(\Gamma)$-equivariant Euclidean metrics $\|\cdot\|_p$ on $\sT_{\xi(p)}(\Ga/\sL_a)$ such that for all $v^\pm\in\sT_{\xi^\pm(p_\pm)}(\Ga/\sP_a^\pm)$ and $t\geqslant 0$:
				\[\|v^\pm\|_{\phi_{\pm t}p}\leqslant Ce^{-ct}\|v^\pm\|_p.\]
			\end{enumerate}
			\item \label{cond:2} There exists a $(\rho,{u})(\Gamma)$-equivariant map $s:\cflow\rightarrow\R^{2n-1}$ which is H\"older continuous and is differentiable along the flow lines of $\phi$. Moreover, for all $p\in\cflow$ the function
			\[f(p)\defeq\left\langle\left.\frac{\partial}{\partial t}\right|_{t=o}s(\phi_tp)\mid\nu_\rho(p)\right\rangle \neq 0.\]
		\end{enumerate}
	\end{definition}
	\begin{remark}
		We note that whenever the first condition of the above definition is satisfied one can use a partition of unity type arguement to guarantee the existence of a $(\rho,{u})(\Gamma)$-equivariant map $s:\cflow\rightarrow\R^{2n-1}$ which is H\"older continuous and is differentiable along the flow lines of $\phi$ (for more details please see the Appendix 8.2 of \cite{GT}).
	\end{remark}
	We denote the space of all representations in $\Ga$ which are affine Anosov with respect to $\sP^\pm_a$ by $\sHom(\Gamma,\Ga,\sP_a^\pm)$.
	\begin{remark}
		Suppose $(\rho,u)\in\sHom(\Gamma,\Ga)$. Then we note that $(\rho,u)$ satisfy Condition \ref{cond:1} in the above definition if and only if $\rho\in\sHom(\Gamma,
		\Go,\sP_0^\pm)$. Moreover, Condition \ref{cond:2} in the above definition is equivalent to saying that the Labourie--Margulis invariant of $(\rho,u)$ is non-vanishing.
	\end{remark}
	We state the key property of affine Anosov representations:
	\begin{theorem}[Ghosh--Treib \cite{GT}]\label{thm:GT}
		Suppose $\rho\in\sHom(\Gamma,\Go,\sP_0^\pm)$ and $(\rho,u)\in\sHom(\Gamma,\G_a)$. Then $(\rho,u)\in\sHom(\Gamma,\Ga,\sP_a^\pm)$ if and only if the action of $(\rho,u)(\Gamma)$ on $\R^{2n-1}$ is proper.
	\end{theorem}
	It is important to mention here the existence and non-existence results due to Abels--Margulis--Soifer \cite{AMS}:
	\begin{theorem}[Abels--Margulis--Soifer \cite{AMS}]
		The following holds:
		\begin{enumerate}
			\item There exist free subgroups of $\mathsf{Aff}(2n-1,\R)$ with linear part Zariski dense in $\mathsf{SO}(n,n-1)$ which act properly discontinuously on $\R^{2n-1}$, when $n$ is even.
			\item There does not exist any subgroup of $\mathsf{Aff}(2n-1,\R)$ with linear part Zariski dense in $\mathsf{SO}(n,n-1)$ which acts properly discontinuously on $\R^{2n-1}$, when $n$ is odd.
		\end{enumerate}
	\end{theorem}

	\subsection{Margulis invariants as derivatives}
	
	In this subsection we relate Margulis invariants with derivatives of certain eigenvalues. 
	
	\begin{remark}
		Suppose $\fg$ (resp. $\fg_0$) denote the Lie algebra of $\G$ (resp. $\Go$) and $G\in\fg$. We observe that $G\in\fg_0$ if and only if $Ge_{2n}=0$.
		
		As $\langle ge_{2n}\mid ge_{2n}\rangle$ is constant, we obtain that $\langle Ge_{2n}\mid e_{2n}\rangle=0$ for all $G\in\fg$. Hence, $Ge_{2n}\in\bR$.	Moreover, for $v\in\bR\subset\R^{n,n}$ we consider
		\[G=\begin{bmatrix}
			0&v\\
			v^tI_{n,n-1}&0
		\end{bmatrix}\]
		and observe that $G\in\fg$ with $Ge_{2n}=v$.	
	\end{remark}	
	
	\begin{remark}\label{rem:protonu}
		Suppose $g\in\G_0$ is a pseudo-hyperbolic element and $W_\pm^g$ be the attracting and repelling subspaces of $g$ (see Section \ref{sec:aar}). Observe that the dimensions of $W_\pm^g$ are $(n-1)$. Also, suppose $g_t\in\G$ is an analytic one parameter family with $g=g_0$. Then, for $t$ small enough we obtain a pair of attracting and repelling subspaces of $g_t$ denoted by $W_\pm^{g_t}$ and whose dimensions are also $(n-1)$. Moreover, using Lemma \ref{lem:choice} we obtain that there is a unique maximal isotropic space inside $(W_+^{g_t})^\perp$ which is in the orbit of $V_+$ (see Section \ref{sec:proto-neutral}). We denote this space by $V_+^{g_t}$. We observe that $g_t$ preserves the line $(W_-^{g_t})^\perp\cap V_+^{g_t}$. Let $\lambda(g_t)$ be the eigenvalue of the action of $g_t$ on this line. We observe that the definition of $\lambda$ given here is compatible with the definition of $\lambda^+$ given in Notation \ref{not:lam}. 
		
		Finally, we start using the following notation:
		\[d\lambda(g,G):=\left.\frac{d}{dt}\right|_{t=0}\lambda(g_t).\]
	\end{remark}
	
	\begin{lemma}\label{lem:deral}
		Suppose $g\in\G_0$ is a pseudo-hyperbolic element and $g_t\in\G$ is an analytic one parameter family whose tangent direction at $g=g_0$ is $G$ and $Ge_{2n}=v$. Then 
		\[\alpha(g,v)=d\lambda(g,G).\]
	\end{lemma}
	\begin{proof}
		We consider the orientation on $V_\pm^{g_t}$ coming from $\oV_\pm$ and the orientation on $W_\pm^{g_t}$ coming from $\oW_\pm$. Let $v^\pm_t\in(W_\mp^{g_t})^\perp\cap V_\pm^{g_t}$ respectively be the $\|\cdot\|$-unit vectors which is positively oriented with respect to the orientations on $V_\pm^{g_t}$ and $W_\pm^{g_t}$. Hence, 
		\[\lambda(g_t)=\langle g_t^{\pm1}v^\pm_t\mid v^\mp_t\rangle/\langle v_t^+\mid v_t^-\rangle.\]
		We take derivative on both sides and deduce that
		\[\left.\frac{d}{dt}\right|_{t=0}\lambda(g_t)={\langle \pm Gv_0^\pm\mid v_0^\mp\rangle}/{\langle v_0^+\mid v_0^-\rangle}.\]
		We observe that $e_{2n}=(av_0^+-bv_0^-)$ for some $a,b\in\R$. As $\langle e_{2n}\mid e_{2n}\rangle=-1$, we obtain that $2ab\langle v_0^+\mid v_0^-\rangle=1$. It follows that 
		\[d\lambda(g,G)=2ab\langle Gv_0^+\mid v_0^-\rangle.\] 
		Moreover, as $\langle g_tv_t^\pm\mid v_t^\pm\rangle=0$, we obtain that $\langle Gv_0^\pm\mid v_0^\pm\rangle=0$. Therefore,
		\[d\lambda(g,G)=\langle G(av_0^+-bv_0^-)\mid (av_0^++bv_0^-)\rangle=\langle v\mid (av_0^++bv_0^-)\rangle.\]
		Finally, we conclude by observing that $(av_0^++bv_0^-)=v^g_0$.
	\end{proof}
	\begin{remark}
		If $\alpha(g,v)\neq0$, then for $t$ small enough $\lambda(g_t)\neq\lambda^{-1}(g_t)$.
	\end{remark}
	
	\begin{proposition}\label{prop:defano}
		Suppose $\rho: \Gamma \rightarrow \Go$ is Anosov with respect to $\sP_0^\pm$. Then $\rho$ is Anosov in $\G$ with respect to $\sP^\pm$ i.e. $\sHom(\Gamma,\Go,\sP_0^\pm)\subset\sHom(\Gamma,\G,\sP^\pm)$.
	\end{proposition}
	
	\begin{proof}
		Let $\rho: \Gamma \rightarrow \Go$ be Anosov with respect to $\sP_0^\pm$ with limit map
		\[\xi:\cflow\to\Go/\sL_0.\]
		We recall that $\Go/\sL_0\subset\G/\sL$. Hence we get limit maps
		\[\xi:\cflow\to\Go/\sL_0\subset\G/\sL.\]
		Therefore, to show that $\rho\in\sHom(\Gamma,\G,\sP^\pm)$ we need only to show that the contraction properties hold true.
		
		We observe that $\xi^\pm(p_\pm)^\perp=(\xi^\pm(p_\pm)^\perp\cap\R^{2n-1})\oplus\R e_{2n}$,
		\begin{align*}
			\sT_{(\xi^+(p_+),\xi^-(p_-))}(\Go/\sL_0)&=\sT_{\xi^+(p_+)}(\Go/\sP_0^+)\oplus\sT_{\xi^-(p_-)}(\Go/\sP_0^-),\\
			\sT_{(\xi^+(p_+),\xi^-(p_-))}(\G/\sL)&=\sT_{\xi^+(p_+)}(\G/\sP^+)\oplus\sT_{\xi^-(p_-)}(\G/\sP^-),
		\end{align*}
		Hence, using Proposition 10.1 of \cite{Voi1} we obtain that
		\begin{align*}
			\sT_{\xi^\pm(p_\pm)}(\G/\sP^\pm)&\cong\sHom_\textrm{skew}(\xi^\pm(p_\pm),\xi^\mp(p_\mp)^\perp)\\
			&\cong\sHom_\textrm{skew}(\xi^\pm(p_\pm),\xi^\mp(p_\mp)^\perp\cap\R^{2n-1})\oplus\sHom(\xi^\pm(p_\pm),\R e_{2n})\\
			&\cong\sT_{\xi^\pm(p_\pm)}(\Go/\sP_0^\pm)\oplus\sHom(\xi^\pm(p_\pm),\R e_{2n}).
		\end{align*}
		As $\rho\in\sHom(\Gamma,\Go,\sP_0^\pm)$, we have a collection of norms satisfying contraction properties on $\sT_{\xi^\pm(p_\pm)}(\Go/\sP_0^\pm)$. We use Corollary 3.3 of \cite{GT} to obtain a norm satisfying contraction properties on $\sHom(\xi^\pm(p_\pm),\R e_{2n})$.
		Finally, we obtain equivariant norms satisfying contraction properties on $\sT_{\xi^\pm(p_\pm)}(\G/\sP^\pm)$ by using the following recipe:  
		\[\|(v,w)\|^2:=\|v\|^2+\|w\|^2\] 
		where $v\in\sT_{\xi^\pm(p_\pm)}(\Go/\sP_0^\pm)$ and $w\in\sHom(\xi^\pm(p_\pm),\R v_0)$.
	\end{proof}
	
	Let $\{\rho_t\}_{t\in(-1,1)}\subset\sHom(\Gamma,\G,\sP^\pm)$ be an analytic one parameter family and ${U}:\Gamma\to\fg$ be such that
	$\rho_0=\rho$ and for all $\gamma\in\Gamma$,
	\[{U}(\gamma)=\left.\frac{d}{dt}\right|_{t=0}\rho_t(\gamma)\rho(\gamma)^{-1}.\]
	We call ${U}$ a \emph{tangent vector} of $\sHom(\Gamma,\G,\sP^\pm)$ at $\rho$. We observe that 
	\[U\in\sZ^1_{\sad\circ\rho}(\Gamma,\fg):=\{{V}\mid {V}(\gamma\eta)=\sad(\rho(\gamma)){V}(\eta)+{V}(\gamma)\text{ for all } \gamma,\eta\in\Gamma\}.\]
	We consider, ${u}(\gamma):={U}(\gamma)e_{2n}$ for all $\gamma\in\Gamma$ and observe that
	\[u\in\sZ^1_{\rho}(\Gamma,\R^{2n-1}):=\{{v}\mid {v}(\gamma\eta)=\rho(\gamma){V}(\eta)+{V}(\gamma)\text{ for all } \gamma,\eta\in\Gamma\}.\]
	
	\begin{remark}\label{rem:1}
		We observe that for all $\gamma\in\Gamma$ and $p_\gamma=(\gamma^+,\gamma^-,0)$, the action of $\rho(\gamma)$ fixes both $\nu_0^\pm(p_\gamma)$. Hence $\lambda^\pm_0(\gamma)=1$ for all $\gamma\in\Gamma$.
	\end{remark}
	\begin{proposition}\label{prop:deral}
		Suppose $\rho\in\sHom(\Gamma,\Go,\sP_0^\pm)\subset\sHom(\Gamma,\G,\sP^\pm)$ and $U\in\sZ^1_{\sad\circ\rho}(\Gamma,\fg)$ is a tangent vector to $\sHom(\Gamma,\G,\sP^\pm)$ at $\rho$. Then for ${u}={U}e_{2n}$,
		\[\alpha(\rho,{u})=d\lambda^+(\rho,{U}).\]
	\end{proposition}
	\begin{proof}
		Follows directly from Lemma \ref{lem:deral}
	\end{proof}
	
	\begin{remark}\label{remark:ana}
		Suppose $\rho\in\sHom(\Gamma,\Go,\sP_0^\pm)\subset\sHom(\Gamma,\G,\sP^\pm)$. Hence, for $t$ small enough $\rho_t\in\sHom(\Gamma,\G,\sP^\pm)$. We use Remark \ref{rem:nonparano} and Theorem \ref{thm:BCLS} to obtain that the limit maps of $\rho_t$ vary analytically along the variable $t$ (for more details please see Theorem 6.1 of \cite{BCLS}, Theorem 3.8 of \cite{HPS} and Theorem 5.18 of \cite{Shub}). 
		
		Suppose $f^\pm_t$ be the functions whose Liv\v sic cohomology classes are proto-Labourie--Margulis invariants and which are obtained via the construction of the collection of norms on $\R^{2n}$, indexed by $p\in\cflow$ and $\rho_t$ (see Remark \ref{rem:norm}. We recall that $f^\pm_t$ are H\"older continuous in the variable $p$ and vary analytically over a neigborhood of $\rho$ inside the representation variety i.e. for some collection of H\" older continuous funtions $\{h_n^\pm\}_{n=0}^\infty$ over $\flow$ we have
		\[f_t^\pm=\sum_{n=0}^\infty t^nh_n^\pm.\]
		Henceforth, to simplify our notations we denote $h_1^\pm$ by $h^\pm$ and $\sum_{n=2}^\infty t^{n-2}h_n^\pm$ by $h_t^\pm$. We note that $h_0^\pm=f_0^\pm$ and $h^\pm_t$ is analytic in the variable $t$. 
	\end{remark}
	\begin{proposition}\label{prop:lminv}
		Suppose $\{\rho_t\}_{t\in(-2,2)}\subset\sHom(\Gamma,\G,\sP^\pm)$ be an analytic one parameter family with $\rho=\rho_0\in\sHom(\Gamma,\Go,\sP_0^\pm)$, ${U}$ is the corresponding tangent vector to $\sHom(\Gamma,\G,\sP^\pm)$ at $\rho$ and ${u}={U}e_{2n}$. Then the derivative of the left proto-Labourie--Margulis invariants of $\rho_t$ at $t=0$ is the Labourie--Margulis invariant of $(\rho,u)\in\sHom(\Gamma,\Ga)$, i.e.
		\[[f_{\rho,u}] = \left[\left.\frac{\partial}{\partial t}\right|_{t=0}f^+_t\right] .\]
	\end{proposition}
	\begin{proof}
		Suppose $h^+,h^+_t:\flow\rightarrow\R$ be as mentioned in Remark \ref{remark:ana}. Hence $f^+_t=f^+_0+th^++t^2h^+_t$. It follows that for all flow invariant probability measures $\mu_\gamma$ supported on the closed orbits corresponding to infinite order elements $\gamma\in\Gamma$,
		\[\int f^+_t d\mu_\gamma=\int (f^+_0+t h^++t^2h^+_t)d\mu_\gamma.\]
		We use Proposition \ref{prop:derlminv} and deduce that
		\[{\log\lambda^+_t(\gamma)-\log\lambda^+_0(\gamma)}={l(\gamma)t}\int h^+d\mu_\gamma +{l(\gamma)}t^2\int h^+_t d\mu_\gamma.\]
		We recall that $\flow$ is compact, the functions $\{h^+_t\}_{t\in[-1,1]}$ vary analytically in the variable $t$ and $[-1,1]$ is a compact set. Hence, for $t\in[-1,1]$,
		\[\int h^+_t d\mu_\gamma\leqslant\max_{t\in[-1,1]}\max_{p\in\flow}|h^+_t(p)|=K\in\R.\]
		It follows that $\lim_{t\to0}t\int h^+_t d\mu_\gamma=0$. Also, we use Remark \ref{rem:1} to obtain
		\begin{align*}
			\int h^+d\mu_\gamma= \frac{d\lambda^+((\rho,{U})(\gamma))}{l(\gamma)\lambda^+_0(\gamma)}=\frac{\alpha((\rho,u)(\gamma))}{l(\gamma)}.
		\end{align*}
		Finally, using Liv\v sic's theorem \cite{Liv} we conclude our result.
	\end{proof}
	\begin{remark}
		Similarly, we can show that the derivative of the right proto-Labourie--Margulis invariants of $\rho_t$ at $t=0$ is the negative of the Labourie--Margulis invariant of $(\rho,u)$. 
		
		Also, we recall that the negative of the Labourie--Margulis invariant of $(\rho,u)$ is the Labourie--Margulis invariant of $(\rho,-u)$.
	\end{remark}
	
	\subsection{Margulis spacetimes and quotients of $\bH$}
	
	In this subsection we relate elements of $\sHom(\Gamma,\Ga)$ with deformations in $\sHom(\Gamma,\G)$ of elements in $\sHom(\Gamma,\Go)$. Moreover, we use this to relate proper affine actions on $\R^{n,n-1}$ with proper actions on $\bH$.
	
	\begin{lemma}
		Suppose $\rho\in\sHom(\Gamma,\Go,\sP_0^\pm)$ and 
		\[E:\sZ^1_{\sad\circ\rho}(\Gamma,\fg)\to\sZ^1_{\rho}(\Gamma,\R^{2n-1})\]
		is the map which sends ${U}\in\sZ^1_{\sad\circ\rho}(\Gamma,\fg)$ to $u=Ue_{2n}$. Then $E$ is surjective and the kernel of $E$ is $\sZ^1_{\sad\circ\rho}(\Gamma,\fg_0)$.
	\end{lemma}
	\begin{proof}
		Suppose $u\in\sZ^1_{\rho}(\Gamma,\R^{2n-1})$. We consider $U:\Gamma\to\fg$ such that,
		\[U(\gamma):=\begin{bmatrix}
			0& u(\gamma)\\
			u(\gamma)^tI_{n,n-1} & 0\end{bmatrix}.\]
		We observe that $U\in\sZ^1_{\sad\circ\rho}(\Gamma,\fg)$. As $Ue_{2n}=u$, we deduce $E$ is surjective.
		
		Suppose $U\in\sZ^1_{\sad\circ\rho}(\Gamma,\fg)$ is such that $Ue_{2n}=0$. As $U(\gamma)\in\fg$, we compute and conclude that $U(\gamma)\in\fg_0$ and our result follows.
	\end{proof}
	
	\begin{lemma}\label{lem:affano}
		Let $\{\rho_t\}_{t\in(-1,1)}\subset\sHom(\Gamma,\G,\sP^\pm)$ be an analytic one parameter family with $\rho=\rho_0\in\sHom(\Gamma,\Go,\sP_0^\pm)$, ${U}$ be the tangent vector to $\{\rho_t\}_{t\in(-1,1)}$ at $\rho$ and ${u}={U}e_{2n}$. Suppose the Labourie--Margulis invariant of $(\rho,u)$ is non-vanishing. Then there exists $\epsilon\in(0,1)$ such that for all $t$ with $|t|\in(0,\epsilon)$, the proto-Labourie--Margulis invariants of $\rho_t$ are also non-vanishing.
	\end{lemma}
	\begin{proof}
		We use Remark \ref{remark:ana} to deduce the existence of H\"older continuous functions $h^\pm, h^\pm_t:\flow\rightarrow\R$ such that $h^\pm_t$ vary analytically over $t$ in a neighborhood of zero (i.e. $|t|<\epsilon$ for some $\epsilon\in(0,1)$) with 
		\[f^\pm_t=f^\pm_0+th^\pm+t^2h^\pm_t\]
		and $[f^\pm_t]$ are respectively the proto-Labourie--Margulis invariants of $\rho_t$. We use Proposition \ref{prop:lminv} to deduce that $h^\pm$ are Liv\v sic cohomologous to non-vanishing functions. Without loss of generality suppose $h^+$ is Liv\v sic cohomologous to a function $h>0$. We define 
		\[f_t\defeq th+t^2h^+_t.\]
		Moreover, let us consider $c>\max\{|h^+_s(p)|\mid p\in\flow, s\in[-\epsilon,\epsilon]\}$. Then for $|t|\in(0,\epsilon)$ with $c |t| <\min\{h(p)\mid p\in\flow\}$ we have 
		\[h+th^+_t\geqslant h-|th^+_t|> h-c|t|>0.\]
		It follows that for all $t$ with $|t|\in(0,\epsilon)$, the functions $f_t$ are non-vanishing. We recall that $h$ is Liv\v sic cohomologous to $h^+$ and use Proposition \ref{prop:derlminv} to deduce that
		\[\int (f^+_t-f_t) d\mu_\gamma=\int f_0 d\mu_\gamma + t\int (h^+-h) d\mu_\gamma=\frac{\log1}{l(\gamma)}+0=0.\]
		Hence, using Liv\v sic's Theorem \cite{Liv} we obtain that $[f^+_t]=[f_t]$ and our result follows.
	\end{proof}
	
	\begin{theorem}\label{thm:affano}
		Let $\{\rho_t\}_{t\in(-1,1)}$ be an analytic one parameter family of representations of $\Gamma$ in $\G$ with $\rho_0(\Gamma)\subset\Go$. Let ${U}$ be the tangent vector to $\{\rho_t\}_{t\in(-1,1)}$ at $\rho=\rho_0$ and ${u}={U}e_{2n}$. Suppose  $(\rho,u)\in\sHom(\Gamma,\Ga,\sP^\pm_a)$. Then there exists $\epsilon>0$ such that for all $t$ with $|t|\in(0,\epsilon)$, $\rho_t$ is Anosov in $\mathsf{SL}(2n,\R)$ with respect to the stabilizer of an $n$-dimensional subspace.
	\end{theorem}
	
	\begin{proof}
		As $(\rho,u)\in\sHom(\Gamma,\Ga,\sP^\pm_a)$, we obtain that $\rho\in\sHom(\Gamma,\Go,\sP^\pm_0)$. We use Proposition \ref{prop:defano} to obtain an $\epsilon>0$ such that for all $|t|<2\epsilon$, $\rho_t\in\sHom(\Gamma,\G,\sP^\pm)$. Finally, we use Lemma \ref{lem:affano}, Proposition \ref{prop:meano1} and Remark \ref{rem:unorien} to conclude our result.
	\end{proof}
	
	\begin{corollary}\label{cor:pexist}
		Let $\{\rho_t\}_{t\in(-1,1)}$ be an analytic one parameter family of representations of $\Gamma$ in $\G$ with $\rho=\rho_0\in\sHom(\Gamma,\Go,\sP_0^\pm)$, ${U}$ be the tangent vector to $\{\rho_t\}_{t\in(-1,1)}$ at $\rho$ and ${u}={U}e_{2n}$. Suppose $(\rho,u)$ is a Margulis spacetime. Then there exists $\epsilon>0$ such that for all $t$ with $|t|\in(0,\epsilon)$, $\rho_t(\Gamma)$ acts properly on $\bH$.
	\end{corollary}
	
	\begin{proof}
		As $(\rho,u)$ is a Margulis spacetime, we recall that $(\rho,u)(\Gamma)$ acts properly on $\R^{2n-1}$. We use Theorem \ref{thm:GT} to obtain that $(\rho,u)\in\sHom(\Gamma,\Ga,\sP^\pm_a)$. Finally, we use Theorems \ref{thm:affano} and \ref{thm:p} to conclude our result.
	\end{proof}
	
	\begin{corollary}
		Suppose $n$ is even. Then there exists a non-abelian free subgroup with finitely many generators inside $\G$ which act properly on $\bH$.
	\end{corollary}
	\begin{proof}
		The result follows from Theorem B of \cite{AMS} and Corollary \ref{cor:pexist}.
	\end{proof}

	\section*{Appendix}
	In this section we introduce affine crossratios corresponding to Margulis invariants and crossratios corresponding to the eigenvalues whose derivative give rise to Margulis invariants. Moreover, we relate these affine crossratios (resp. crossratios) with a limiting result corresponding the Margulis invariants (resp. eigenvalues).
	\subsection{Affine Crossratios and Margulis Invariants}\label{sec:acr}
	In this subsection we define affine crossratios. 
	
	Suppose $\{V_i\}_{i=1}^{4}$ are four null vector subspaces of $\R^{2n-1}$ which are mutually transverse to each other and $\{A_i\}_{i=1}^{4}$ are four affine subspaces in $\R^{2n-1}$ such that $V_i$ is respectively parallel to $A_i$.  Moreover, for $i\neq j$ suppose $x_{i,j}$ be a point in $A_i\cap A_j$ and suppose $v_{i,j}\defeq \nu(V_i^\perp,V_j^\perp)$. 
	\begin{lemma}\label{lem:signu}
		Suppose $V_i,V_j,V_k$ are three null vector subspaces of $\R^{2n-1}$ which are mutually transverse to each other. Then $v_{i,j}=(-1)^{n-1}v_{j,i}$ and
		\[\langle v_{i,j}\mid v_{i,k}\rangle=1=\langle v_{i,j}\mid v_{k,j}\rangle.\]
	\end{lemma}
	\begin{proof}
		As $\dim(V_i)=n$, $\dim(V_i^\perp)=(n-1)$ and $\langle v_{i,*}\mid v_{i,*}\rangle=1$, we obtain that $(v_{i,j}-av_{i,k})\in V_i^\perp$ for some non-zero constant $a$. It follows that 
		\[a=\langle v_{i,j}\mid v_{i,k}\rangle=a^{-1}.\]
		Hence, $a^2=1$. As $\langle v_{i,j}\mid v_{i,j}\rangle=1$, using continuity we conclude that $a=1$.
		
		We note that $-I$ and $I$ lie in the same connected component of the orthogonal group $\mathsf{O}(n)$ if and only if $n$ is even. Hence, $I_{n,n-1}\in\mathsf{SO}_0(n,n-1)$ for $n$ odd and $-I_{n,n-1}\in\mathsf{SO}_0(n,n-1)$ for $n$ even. It follows that for $n$ odd, $\nu(W_-,W_+)=\nu([I_{n,n-1}])=\nu(W_+,W_-)=(-1)^{n-1}\nu(W_+,W_-)$ and for $n$ even,
		$\nu(W_-,W_+)=\nu([-I_{n,n-1}])=-\nu(W_+,W_-)=(-1)^{n-1}\nu(W_+,W_-)$. Our result follows.
	\end{proof}
	We define the \emph{affine crossratio} as
	\[\beta_{1,2,3,4}=\beta(A_1,A_2,A_3,A_4)\defeq \langle x_{1,3}-x_{2,4}\mid v_{1,4}-v_{2,3} \rangle.\]
	
	In particular, for $n=2$, null subspaces of $\R^{2,1}$ are planes which are tangent to the light-cone. Now given four affine subspaces of $\R^3$ which are mutually transverse to each other and whose underlying vector spaces are null subspaces, we obtain that their mutual intersections give us four affine lines. Then $x_{1,3}$ (resp. $x_{2,4}$) is a point on the line of intersection between the first (resp. second) and the third (resp. fourth) affine subspace and $v_{1,4}$ (resp. $v_{2,3}$) are vectors which are unit with respect to the bilinear form $\langle\mid\rangle$, parallel with the intersection between the first (resp. second) and the fourth (resp. third) affine subspace and whose directions are consistent with the choice made in Remark \ref{rem:nuor}. 
	
	The above definition is well defined since using Lemma \ref{lem:signu} it follows that for all $a,b\in\R$, $\langle av_{1,3}-bv_{2,4}\mid v_{1,4}-v_{2,3} \rangle=0$. We also observe that the following equality holds for any other points $x_{3,1}\in A_3\cap A_1$ and $x_{4,2}\in A_4\cap A_2$:
	\[ \langle x_{1,3}-x_{2,4}\mid v_{1,4}-v_{2,3} \rangle= \langle x_{3,1}-x_{4,2}\mid v_{1,4}-v_{2,3} \rangle.\]
	Now for $i\neq j$ we consider the following decomposition:
	\[\R^{2n-1}=V_i^\perp\oplus V_j^\perp\oplus (V_i\cap V_j).\]  
	Let $x_i^j$ be the projection of $x_{i,j}$ on $V_i^\perp$ with respect to this decomposition. We observe that as $x_{i,j}$ varies along $A_i\cap A_j$ the projection $x_i^j$ stays fixed. Moreover, $x_i^j+V_j=A_j$. Using these observations we obtain:
	\begin{align*}
		&\beta_{1,2,3,4}=\langle x_{1,3}-x_{2,4}\mid v_{1,4}-v_{2,3} \rangle=\langle x_3^1+ x_1^3-x_2^4-x_4^2\mid v_{1,4}-v_{2,3}\rangle\\
		&=\langle x_3^1\mid v_{1,4}\rangle-\langle x_1^3\mid v_{2,3}\rangle -\langle x_2^4\mid v_{1,4}\rangle +\langle x_4^2\mid v_{2,3}\rangle\\
		&=\langle x_3^1\mid v_{1,4}-v_{1,3}\rangle-\langle x_1^3\mid v_{2,3}-v_{1,3}\rangle-\langle x_2^4\mid v_{1,4}-v_{2,4}\rangle+\langle x_4^2\mid v_{2,3}-v_{2,4}\rangle.
	\end{align*}
	Hence for any $x_i\in A_i$ we obtain the following identity:
	\begin{align}\label{identity:altmarg}
		\beta_{1,2,3,4}=\langle x_1\mid v_{1,4}-v_{1,3}\rangle&+\langle x_2\mid v_{2,3}-v_{2,4}\rangle\\
		&+\langle x_3\mid v_{1,3}-v_{2,3}\rangle+\langle x_4\mid v_{2,4}-v_{1,4}\rangle.\notag
	\end{align}
	\begin{proposition}\label{prop:cr}
		Let $\beta$ be defined as above. Then for any five affine null spaces $A_*,\{A_i\}_{i=1}^{4}$ which are mutually transverse to each other and for any $(g,u)\in\Ga$ the following identities hold:
		\begin{enumerate}
			\item $\beta((g,u)A_1,(g,u)A_2,(g,u)A_3,(g,u)A_4)=\beta(A_1,A_2,A_3,A_4)$,
			\item \label{identity:afcros} $\beta_{1,2,3,4}=\beta_{2,1,4,3}=(-1)^n\beta_{3,4,1,2}=(-1)^n\beta_{4,3,2,1}$,
			\item $\beta_{1,2,3,4}+\beta_{1,2,4,3}=0$,
			\item $\beta_{1,*,3,4}+\beta_{*,2,3,4}=\beta_{1,2,3,4}$.
		\end{enumerate}
		Moreover, for $n$ even, $\beta_{1,2,3,4}+\beta_{1,3,4,2}+\beta_{1,4,2,3}=0$.
	\end{proposition}
	\begin{proof}
		We use the definition of $\beta$ to deduce that for all $(g,u)\in\Ga$,
		\[\beta((g,u)A_1,(g,u)A_2,(g,u)A_3,(g,u)A_4)=\beta(A_1,A_2,A_3,A_4).\]
		We recall that $\nu(V,W)=(-1)^{n-1}\nu(W,V)$. Moreover, exploiting the symmetries in the definition of $\beta$ we obtain the identity \ref{identity:afcros}. 
		Now interchanging $A_3$ and $A_4$ in the identity \ref{identity:altmarg} and adding them up we obtain that $\beta_{1,2,3,4}+\beta_{1,2,4,3}=0$. 
		Suppose $A_*$ is another affine null space which is mutually transverse with the other null spaces $\{A_i\}_{i=1}^4$. We observe that
		\begin{align*}
			&\langle x_1\mid v_{1,4}-v_{1,3}\rangle+\langle x_*\mid v_{*,3}-v_{*,4}\rangle+\langle x_3\mid v_{1,3}-v_{*,3}\rangle+\langle x_4\mid v_{*,4}-v_{1,4}\rangle\\
			&+\langle x_*\mid v_{*,4}-v_{*,3}\rangle+\langle x_2\mid v_{2,3}-v_{2,4}\rangle+\langle x_3\mid v_{*,3}-v_{2,3}\rangle+\langle x_4\mid v_{2,4}-v_{*,4}\rangle\\
			&=\langle x_1\mid v_{1,4}-v_{1,3}\rangle+\langle x_2\mid v_{2,3}-v_{2,4}\rangle+\langle x_3\mid v_{1,3}-v_{2,3}\rangle+\langle x_4\mid v_{2,4}-v_{1,4}\rangle.
		\end{align*}
		Therefore, we conclude that $\beta_{1,*,3,4}+\beta_{*,2,3,4}=\beta_{1,2,3,4}$.
		
		Finally, for even $n$, we cyclically permute $A_2,A_3,A_4$ in the identity \ref{identity:altmarg} and add them up to conclude our result.
	\end{proof}
	
	\begin{proposition}\label{prop:alphabeta}
		Suppose $(g,u)\in\Ga$ be such that its action on the space of affine null subspaces has an attracting (resp. repelling) fixed point $A_+$ (resp. $A_-$) and $A_\pm$ are transverse to each other. Then for any affine null space $A$ which is transverse to both $A_\pm$ the following holds:
		\begin{enumerate}
			\item $\beta(A_-,A_+,(g,u) A, A)=2\alpha(g,u)$ when $n$ is even,
			\item $\beta(A_-,A_+,(g,u) A, A)=0$ when $n$ is odd.
		\end{enumerate}
	\end{proposition}
	\begin{proof}
		Suppose $h=(g,u)$ and $x_\pm,x,x_h$ are any four points respectively in $A_\pm$, $A$ and $hA$. We observe that $\langle x_\pm\mid v_{A_\pm,h A}\rangle=\langle g^{-1}x_\pm\mid v_{A_\pm,A}\rangle$. Hence,
		\[\langle x_\pm\mid v_{A_\pm,A}-v_{A_\pm,h A}\rangle=\langle x_\pm-g^{-1}x_\pm\mid v_{A_\pm,A}\rangle=\langle x_\pm-h^{-1}x_\pm-g^{-1}u\mid v_{A_\pm,A}\rangle.\]
		As $x_h=hx^\prime$ for some $x^\prime\in A$, we obtain $g^{-1}x_h=g^{-1}u+ x^\prime$. Suppose $V$ is the underlying vector subspace of $A$. We observe that $x^\prime-x\in V$ and $(v_{A_-,A}-v_{A_+,A})\in V^\perp$. It follows that
		\[\langle x_h\mid v_{A_-,h A}-v_{A_+,h A}\rangle=\langle g^{-1}u+x\mid v_{A_-,A}-v_{A_+,A}\rangle.\]
		We use the identity \ref{identity:altmarg} and deduce that
		\[\beta(A_-,A_+,h A, A)=\langle x_--h^{-1}x_-\mid v_{A_-,A}\rangle + \langle h^{-1}x_+-x_+\mid v_{A_+,A}\rangle.\]
		Suppose $V_\pm$ are the vector spaces which are respectively parallel to $A_\pm$. We recall that $h$ fixes $A_\pm$ and hence $(x_\pm-h^{-1}x_\pm)\in V_\pm$. On the other hand $(v_{A_\pm,A}-v_{A_\pm,A_\mp})\in V_\pm^\perp$ and therefore we deduce that
		\[\beta(A_-,A_+,(g,u) A, A)=\langle u\mid v_{A_-,A_+}-v_{A_+,A_-}\rangle.\]
		As $v_{i,j}=(-1)^{n-1}v_{j,i}$, our result follows.
	\end{proof}
	\begin{remark}
		Suppose $(\rho,u)\in\sHom(\Gamma,\Ga)$ is such that $\rho\in\sHom(\Gamma,\Go,\sP_0^\pm)$. Hence, $(\rho,u)$ admits limit maps $\xi^\pm:\bdry\rightarrow\Ga/\sP_a^\pm$ which satisfy the first two properties of being an affine Anosov representation (for more details please see Proposition 5.3 of \cite{GT}). In general, it only fails to satisfy the third property. Hence, for all infinite order elements $\gamma\in\Gamma$ we obtain that the action of $({\rho,u})(\gamma)$ on $\cX_a$ has an attracting fixed point and a repelling fixed point. We abuse notation and let $\xi_{\rho,u}(\gamma^+)$ (resp. $\xi_{\rho,u}(\gamma^-)$) denote the attracting (resp. repelling) fixed point. Henceforth, we fix the representation $(\rho,u)$ and omit the subscripts $(\rho,u)$ from the notation of the Margulis invariants and the affine crossratios. Also, when there is no confusion of notation, for $a,b,c,d\in\bdry$ all distinct, we denote $\beta(\xi(a), \xi(b), \xi(c), \xi(d))$ by $\beta(a,b,c,d)$.
	\end{remark}
	\begin{proposition}\label{prop:lim1}
		Suppose $n$ is even, $\rho\in\sHom(\Gamma,\Go,\sP_0^\pm)$ and $(\rho,u)\in\sHom(\Gamma,\Ga)$. Also, suppose $\gamma,\eta\in\Gamma$ are two infinite order elements such that the four points $\gamma^\pm,\eta^\pm\in\bdry$ are distinct and the sequence $\{\gamma^m\eta^k\}_{m\in\mathbb{N}}\subset\Gamma$ contains a subsequence $\{\gamma^{n_i}\eta^k\}_{i\in\mathbb{N}}$ consisting only of infinite order elements. Then the following identity holds:
		\begin{align*}
			{\beta(\eta^-, \gamma^-,\gamma^+,\eta^k\gamma^+)+\beta(\eta^+,\gamma^+,\gamma^-,\eta^{-k}\gamma^-)}=2\lim_{i\to\infty}[\alpha(\gamma^{n_i}\eta^k)&-\alpha(\gamma^{n_i})]\\&-2\alpha(\eta^k).
		\end{align*}
	\end{proposition}
	\begin{proof}
		Suppose $\xi$ is the affine limit map as mentioned in the previous remark and $A,B,C$ are affine null spaces such that $A$ is transverse to both $\xi((\gamma^{n_i}\eta^k)^\pm)$, $B$ is transverse to both $\xi(\gamma^\pm)$ and $C$ is transverse to both $\xi(\eta^\pm)$. We use Proposition \ref{prop:alphabeta} and obtain the following three identities:
		\begin{align*}
			2\alpha(\gamma^{n_i}\eta^k)&=\beta(\xi((\gamma^{n_i}\eta^k)^-),\xi((\gamma^{n_i}\eta^k)^+),(\rho,u)(\gamma^{n_i}\eta^k) A, A),\\
			2\alpha(\gamma^{n_i})&=\beta(\xi(\gamma^-),\xi(\gamma^+),(\rho,u)(\gamma^{n_i}) B, B),\\
			2\alpha(\eta^k)&=\beta(\xi(\eta^-),\xi(\eta^+),(\rho,u)(\eta^k) C, C).
		\end{align*}
		We observe that $\lim_{i\to\infty}(\gamma^{n_i}\eta^k)^+=\gamma^+$ and $\lim_{i\to\infty}(\gamma^{n_i}\eta^k)^-=\eta^{-k}\gamma^-$. Also, $\eta^\pm\neq\gamma^\pm$. It follows that $\eta^-\neq \lim_{i\to\infty}(\gamma^{n_i}\eta^k)^\pm$. Hence, we can choose $A=B=\xi(\eta^-)$, $C=\xi(\gamma^-)$. We use Proposition \ref{prop:cr} (4) to obtain:
		\begin{align*}
			2\alpha(\gamma^{n_i}\eta^k)   &=\beta(\gamma^+,(\gamma^{n_i}\eta^k)^+,\gamma^{n_i}\eta^-, \eta^-)+\beta((\gamma^{n_i}\eta^k)^-,\gamma^+,\gamma^{n_i}\eta^-, \eta^-),\\
			2\alpha(\gamma^{n_i})
			&=\beta((\gamma^{n_i}\eta^k)^-,\gamma^+,\gamma^{n_i}\eta^-, \eta^-)+\beta(\gamma^-,(\gamma^{n_i}\eta^k)^-,\gamma^{n_i}\eta^-, \eta^-).
		\end{align*}
		Also, using Proposition \ref{prop:cr} (1) we deduce that
		\begin{align*}
			\beta(\gamma^+,(\gamma^{n_i}\eta^k)^+,\gamma^{n_i}\eta^-, \eta^-)&=\beta(\gamma^+,(\eta^k\gamma^{n_i})^+,\eta^-, \gamma^{-n_i}\eta^-),\\ 2\alpha(\eta^k)&=\beta(\eta^-,\eta^+,\gamma^-,\eta^{-k}\gamma^-).
		\end{align*}
		Hence, taking the limit and then using Proposition \ref{prop:cr} (2) and (3) we obtain:
		\begin{align*}
			2\lim_{i\to\infty}[\alpha(\gamma^{n_i}\eta^k)-\alpha(\gamma^{n_i})]
			&=\beta(\gamma^+,\eta^k\gamma^+,\eta^-, \gamma^-)-\beta(\gamma^-,\eta^{-k}\gamma^-,\gamma^+,\eta^-)\\
			&=\beta(\eta^-, \gamma^-,\gamma^+,\eta^k\gamma^+)+\beta(\eta^-,\gamma^+,\gamma^-,\eta^{-k}\gamma^-).
		\end{align*}
		Now we use Proposition \ref{prop:cr} (4) to deduce that
		\[\beta(\eta^-,\gamma^+,\gamma^-,\eta^{-k}\gamma^-)=\beta(\eta^-,\eta^+,\gamma^-, \eta^{-k}\gamma^-)+\beta(\eta^+,\gamma^+,\gamma^-,\eta^{-k}\gamma^-).\]
		Finally, our result follows from combining the last three identities.
	\end{proof}
	\begin{proposition}\label{prop:lim2}
		Suppose $n$ is even, $\rho\in\sHom(\Gamma,\Go,\sP_0^\pm)$ and $(\rho,u)\in\sHom(\Gamma,\Ga)$. Suppose $\gamma,\eta\in\Gamma$ are two infinite order elements such that the four points $\gamma^\pm,\eta^\pm\in\bdry$ are distinct and the sequence $\{\gamma^m\eta^m\}_{m\in\mathbb{N}}\subset\Gamma$ contains a subsequence $\{\gamma^{n_i}\eta^{n_i}\}_{i\in\mathbb{N}}$ consisting only of infinite order elements. Then the following identity holds:
		\[\lim_{i\to\infty}(\alpha(\gamma^{n_i}\eta^{n_i})-\alpha(\gamma^{n_i})-\alpha(\eta^{n_i}))=\beta(\eta^-, \gamma^-,\gamma^+,\eta^+).\]
	\end{proposition}
	\begin{proof}
		Suppose $\xi$ is the corresponding limit map and suppose $\{A_i,B_i,C_i\}_{i\in\mathbb{N}}$ is a collection of affine null spaces such that $A_i$ is transverse to $\xi((\gamma^{n_i}\eta^{n_i})^\pm)$, $B_i$ is transverse to $\xi(\gamma^\pm)$ and $C_i$ is transverse to $\xi(\eta^\pm)$. We use Proposition \ref{prop:alphabeta} to obtain the following three identities:
		\begin{align*}
			2\alpha(\gamma^{n_i}\eta^{n_i})=&\beta(\xi((\gamma^{n_i}\eta^{n_i})^-),\xi((\gamma^{n_i}\eta^{n_i})^+),(\rho,u)(\gamma^{n_i}\eta^{n_i}) A_i, A_i),\\
			2\alpha(\gamma^{n_i})=&\beta(\xi(\gamma^-),\xi(\gamma^+),(\rho,u)(\gamma^{n_i}) B_i, B_i),\\
			2\alpha(\eta^{n_i})=&\beta(\xi(\eta^-),\xi(\eta^+),(\rho,u)(\eta^{n_i}) C_i, C_i).
		\end{align*}
		Suppose $D_i\defeq(\rho,u)(\eta^{n_i})A_i$. We use Proposition \ref{prop:cr} (1) and (4) to deduce
		\begin{align*}
			\beta(\xi((\gamma^{n_i}\eta^{n_i})^-)&,\xi((\gamma^{n_i}\eta^{n_i})^+),(\rho,u)(\gamma^{n_i}) D_i, (\rho,u)(\eta^{-n_i})D_i)\\
			=&\beta(\xi((\gamma^{n_i}\eta^{n_i})^-),\xi((\gamma^{n_i}\eta^{n_i})^+),(\rho,u)(\gamma^{n_i}) D_i, D_i)\\
			&+\beta(\xi((\gamma^{n_i}\eta^{n_i})^-),\xi((\gamma^{n_i}\eta^{n_i})^+), D_i, (\rho,u)(\eta^{-n_i})D_i)\\
			=&\beta(\xi((\gamma^{n_i}\eta^{n_i})^-),\xi((\gamma^{n_i}\eta^{n_i})^+),(\rho,u)(\gamma^{n_i}) D_i, D_i)\\
			&+\beta(\xi((\eta^{n_i}\gamma^{n_i})^-),\xi((\eta^{n_i}\gamma^{n_i})^+), (\rho,u)(\eta^{n_i}) D_i, D_i).
		\end{align*}
		Moreover, by applying Proposition \ref{prop:cr} (4) twice we deduce that 
		\begin{align*}
			\beta(\xi((\gamma^{n_i}\eta^{n_i})^-)&,\xi((\gamma^{n_i}\eta^{n_i})^+),(\rho,u)(\gamma^{n_i}) D_i, D_i)\\
			=&\beta(\xi((\gamma^{n_i}\eta^{n_i})^-),\xi(\gamma^-),(\rho,u)(\gamma^{n_i}) D_i, D_i)\\
			&+\beta(\xi(\gamma^+),\xi((\gamma^{n_i}\eta^{n_i})^+),(\rho,u)(\gamma^{n_i}) D_i, D_i)\\
			&+\beta(\xi(\gamma^-),\xi(\gamma^+),(\rho,u)(\gamma^{n_i}) D_i, D_i).
		\end{align*}
		Similarly, we also have
		\begin{align*}
			\beta(\xi((\eta^{n_i}\gamma^{n_i})^-)&,\xi((\eta^{n_i}\gamma^{n_i})^+),(\rho,u)(\eta^{n_i}) D_i, D_i)\\
			=&\beta(\xi((\eta^{n_i}\gamma^{n_i})^-),\xi(\eta^-),(\rho,u)(\eta^{n_i}) D_i, D_i)\\
			&+\beta(\xi(\eta^+),\xi((\eta^{n_i}\gamma^{n_i})^+),(\rho,u)(\eta^{n_i}) D_i, D_i)\\
			&+\beta(\xi(\eta^-),\xi(\eta^+),(\rho,u)(\eta^{n_i}) D_i, D_i).
		\end{align*}
		We observe that $\lim_{i\to\infty}(\gamma^{n_i}\eta^{n_i})^+=\gamma^+$, $\lim_{i\to\infty}(\gamma^{n_i}\eta^{n_i})^-=\eta^-$ and also $\lim_{i\to\infty}(\eta^{n_i}\gamma^{n_i})^+=\eta^+$, $\lim_{i\to\infty}(\eta^{n_i}\gamma^{n_i})^-=\gamma^-$. We recall that the four points $\eta^\pm,\gamma^\pm$ are distinct. Let $x\in\bdry$ be such that it is distinct from all the following four points: $\gamma^\pm,\eta^\pm$. Hence without loss of generality we can choose $B_i=C_i=D_i=\xi(x)$ for all $i\in\mathbb{N}$.
		It follows that
		\begin{align*}
			2(\alpha(\gamma^{n_i}\eta^{n_i})&-\alpha(\gamma^{n_i})-\alpha(\eta^{n_i}))\\
			=&\beta((\gamma^{n_i}\eta^{n_i})^-,\gamma^-,\gamma^{n_i}x, x)+\beta(\gamma^+,(\gamma^{n_i}\eta^{n_i})^+,\gamma^{n_i}x, x)\\
			&+\beta((\eta^{n_i}\gamma^{n_i})^-,\eta^-,\eta^{n_i}x, x)+\beta(\eta^+,(\eta^{n_i}\gamma^{n_i})^+,\eta^{n_i}x, x)\\
			=&\beta((\gamma^{n_i}\eta^{n_i})^-,\gamma^-,\gamma^{n_i}x, x)+\beta(\gamma^+,(\eta^{n_i}\gamma^{n_i})^+,x, \gamma^{-n_i}x)\\
			&+\beta((\eta^{n_i}\gamma^{n_i})^-,\eta^-,\eta^{n_i}x, x)+\beta(\eta^+,(\gamma^{n_i}\eta^{n_i})^+, x , \eta^{-n_i} x).
		\end{align*}
		Finally, we observe that
		\begin{align*}
			\lim_{i\to\infty}\beta((\gamma^{n_i}\eta^{n_i})^-,\gamma^-,\gamma^{n_i} x, x)=& \beta(\eta^-,\gamma^-,\gamma^+, x),\\
			\lim_{i\to\infty}\beta(\gamma^+,(\eta^{n_i}\gamma^{n_i})^+, x, \gamma^{-n_i} x)=& \beta(\gamma^+,\eta^+, x, \gamma^-)=\beta(x, \gamma^-, \gamma^+, \eta^+),\\
			\lim_{i\to\infty}\beta((\eta^{n_i}\gamma^{n_i})^-,\eta^-,\eta^{n_i}x, x)=& \beta(\gamma^-,\eta^-,\eta^+, x)=\beta(\eta^-, \gamma^-, x, \eta^+),\\
			\lim_{i\to\infty}\beta(\eta^+,(\gamma^{n_i}\eta^{n_i})^+, x , \eta^{-n_i} x)=& \beta(\eta^+,\gamma^+, x , \eta^-)=\beta(\eta^-,x,\gamma^+,\eta^+),
		\end{align*}
		and conclude our result using Proposition \ref{prop:cr} (4).
	\end{proof}

	\subsection{Crossratios and Eigenvalues}\label{sec:cr}
	In this subsection we define, for the linear case, appropriate counterparts of the affine crossratios. Affine crossratios can be seen as infinitesimal versions of these crossratios. 
	
	Let $\{W_i\}_{i=1}^4$ be four $(n-1)$-dimensional isotropic subspaces in $\R^{n,n}$ such that their orthogonal spaces are mutually transverse to each other. We recall that for $i\neq j$, $W_i^\perp\cap W_j^\perp$ contain exactly two isotropic lines. We use Lemmas \ref{lem:stiso} and \ref{lem:choice} to  choose $v_{i,j}^\pm$ arbitrarily from one of these two lines such that $\R  v_{i,j}^+\oplus W_i$ (resp. $\R  v_{i,j}^-\oplus W_i$) lies in the orbit of $\mathsf{cspan}([I_n,I_n]^t)$ (resp. $J\mathsf{cspan}([I_n,I_n]^t)$) under the action of $\mathsf{SO}_0(n,n)$.  
	
	We define the following \emph{crossratio}: 
	\[\theta_{1,2,3,4}=\theta(W_1,W_2,W_3,W_4)\defeq\frac{\langle v^+_{1,3}\mid v^-_{2,3}\rangle\langle v^+_{2,4}\mid v^-_{1,4}\rangle}{\langle v^+_{2,4}\mid v^-_{2,3}\rangle\langle v^+_{1,3}\mid v^-_{1,4}\rangle}.\]
	As $v^\pm_{i,j}$ are unique upto scaling, we observe that the above expression does not depend on the choice of the vectors $v^\pm_{i,j}$ and hence is well defined.
	
	\begin{lemma}\label{lem:id}
		Let $W_*,W_i,W_j,W_k$ be four $(n-1)$ dimensional isotropic subspaces such that their orthogonal spaces are mutually transverse to each other. Then the following identity holds:
		\[\frac{\langle v^+_{*,i}\mid v^-_{*,i}\rangle \langle v^+_{*,j} \mid v^-_{*,k}\rangle}{\langle v^+_{*,j}\mid v^-_{*,i}\rangle\langle v^+_{*,i} \mid
			v^-_{*,k}\rangle}=1=\frac{\langle v^+_{i,*}\mid v^-_{i,*}\rangle \langle v^+_{j,*} \mid v^-_{k,*}\rangle}{\langle v^+_{j,*}\mid v^-_{i,*}\rangle\langle  v^+_{i,*}\mid v^-_{k,*}\rangle}.\]
		Moreover, for $n$ even, $\R v_{i,j}^\pm=\R v_{j,i}^\mp$ and for $n$ odd, $\R v_{i,j}^\pm= \R v_{j,i}^\pm$.
	\end{lemma}
	\begin{proof}
		As $\R  v_{*,y}^\pm\oplus W_* \subset W_*^\perp$ is a maximal isotropic subspace for $y=i,j,k$ and $\R  v_{*,y}^+\oplus W_*$ (resp. $\R  v_{*,y}^-\oplus W_*$)  lie in the orbit of $\mathsf{cspan}([I_k,I_k]^t)$ (resp. $J\mathsf{cspan}([I_k,I_k]^t)$) under the action of $\mathsf{SO}_0(n,n)$, we deduce that there exist non-zero constants $a^\pm,b^\pm$ such that
		\[v_{*,i}^\pm-a^\pm v_{*,j}^\pm \in W_* \ni v_{*,i}^\pm-b^\pm v_{*,k}^\pm.\]
		Hence, $\langle v^+_{*,i}\mid v^-_{*,i}\rangle=a^+\langle v^+_{*,j}\mid v^-_{*,i}\rangle=b^-\langle v^+_{*,i}\mid v^-_{*,k}\rangle=a^+b^-\langle v^+_{*,j}\mid v^-_{*,k}\rangle$ and the left hand side of the identity follows.
		
		Moreover, we observe that $I_{n,n}W_\pm=W_\mp=-I_{n-1,n+1}W_\pm$, $I_{n,n}v_\pm=v_\mp$ and $-I_{n-1,n+1}v_\pm=v_\pm$. As $I_{n,n}\in\mathsf{SO}_0(n,n)$ for $n$ even, we obtain that $\nu_\pm([I_{n,n}])=\nu_\mp([I])$. As $-I_{n-1,n+1}\in\mathsf{SO}_0(n,n)$ for $n$ odd, we obtain that $\nu_\pm([-I_{n-1,n+1}])=\nu_\pm([I])$. Suppose $g\in\mathsf{SO}_0(n,n-1)$ is such that $gW_+=W_i$ and $gW_-=W_j$. Then $\R v_{i,j}^\pm=\R\nu_\pm([g])$. Finally, we conclude by observing that, for $n$ even,
		$\R v_{i,j}^\pm=\R\nu_\pm([g])=\R\nu_\mp([gI_{n,n}])=\R v_{j,i}^\mp$ and for $n$ odd, $\R v_{i,j}^\pm=\R\nu_\pm([g])=\R\nu_\pm([-gI_{n-1,n+1}])=\R v_{j,i}^\pm$.
	\end{proof}
	\begin{proposition}\label{prop:ecr}
		Let $\{W_i\}_{i=1}^4$ and $W_*$ be five $(n-1)$ dimensional isotropic subspaces such that their orthogonal spaces are transverse to each other and let $g\in\G$. Then the following identities hold:
		\begin{enumerate}
			\item $\theta(gW_1,gW_2,gW_3,gW_4)=\theta(W_1,W_2,W_3,W_4)$,
			\item $\theta_{1,2,3,4}=\theta_{2,1,4,3}=\theta_{3,4,1,2}^{(-1)^{n}}=\theta_{4,3,2,1}^{(-1)^{n}}$,
			\item $\theta_{1,2,3,4}\theta_{1,2,4,3}=1$,
			\item $\theta_{1,*,3,4}\theta_{*,2,3,4}=\theta_{1,2,3,4}$.
		\end{enumerate}
		Moreover, for $n$ even, $\theta_{1,2,3,4}\theta_{1,3,4,2}\theta_{1,4,2,3}=1$.
	\end{proposition}
	\begin{proof}
		The first two identity follows from the definition of $\theta$ and Lemma \ref{lem:id}. Also, the third identity follows from Lemma \ref{lem:id} by taking $j=k$. 
		
		We use the definition of $\theta$ and cancel the terms appearing both in the numerator and denominator to see that
		\[\frac{\theta_{*,2,3,4}\theta_{1,*,3,4}}{\theta_{1,2,3,4}}=\frac{\langle v^+_{*,3}\mid v^-_{2,3}\rangle\langle v^+_{2,4}\mid v^-_{*,4}\rangle}{\langle v^+_{*,3}\mid v^-_{*,4}\rangle\langle v^+_{*,4}\mid v^-_{*,3}\rangle}\frac{\langle v^+_{1,3}\mid v^-_{*,3}\rangle\langle v^+_{*,4}\mid v^-_{1,4}\rangle}{\langle v^+_{1,3}\mid v^-_{2,3}\rangle\langle v^+_{2,4}\mid v^-_{1,4}\rangle}.\]
		The fourth identity follows by replacing the above formula by the following identities which are obtained by repeated application of Lemma \ref{lem:id}:
		\begin{align*}
			\langle v^+_{*,3}\mid v^-_{*,4}\rangle\langle v^+_{*,4}\mid v^-_{*,3}\rangle&=\langle v^+_{*,3}\mid v^-_{*,3}\rangle\langle v^+_{*,4}\mid v^-_{*,4}\rangle,\\
			\langle v^+_{*,3}\mid v^-_{*,3}\rangle \langle v^+_{1,3}\mid v^-_{2,3}\rangle &=\langle v^+_{1,3}\mid v^-_{*,3}\rangle\langle v^+_{*,3}\mid v^-_{2,3}\rangle,\\
			\langle v^+_{*,4}\mid v^-_{*,4}\rangle \langle v^+_{2,4}\mid v^-_{1,4}\rangle &=\langle v^+_{2,4}\mid v^-_{*,4}\rangle\langle v^+_{*,4}\mid v^-_{1,4}\rangle .
		\end{align*}
		Moreover, by repeated use of Lemma \ref{lem:id} we obtain the following identities:
		\begin{align*}
			\langle v^+_{1,3}\mid v^-_{2,3}\rangle\langle v^+_{4,3}\mid v^-_{1,3}\rangle&=\langle v^+_{1,3}\mid v^-_{1,3}\rangle\langle  v^+_{4,3}\mid v^-_{2,3}\rangle,\\
			\langle v^+_{2,4}\mid v^-_{1,4}\rangle\langle v^+_{1,4}\mid v^-_{3,4}\rangle&=\langle v^+_{1,4}\mid v^-_{1,4}\rangle\langle  v^+_{2,4}\mid v^-_{3,4}\rangle,\\
			\langle v^+_{3,2}\mid v^-_{1,2}\rangle\langle v^+_{1,2}\mid v^-_{4,2}\rangle&=\langle v^+_{1,2}\mid v^-_{1,2}\rangle\langle  v^+_{3,2}\mid v^-_{4,2}\rangle.
		\end{align*}
		Plugging it in we obtain that
		\[\theta_{1,2,3,4}\theta_{1,3,4,2}\theta_{1,4,2,3}=\frac{\langle  v^+_{4,3}\mid v^-_{2,3}\rangle\langle v^+_{2,4}\mid v^-_{3,4}\rangle\langle  v^+_{3,2}\mid v^-_{4,2}\rangle}{\langle  v^+_{2,4}\mid v^-_{2,3}\rangle\langle  v^+_{3,2}\mid v^-_{3,4}\rangle\langle v^+_{4,3}\mid v^-_{4,2}\rangle}.\]
		As $v^+_{i,j}=v^-_{j,i}$ for $n$ even, we conclude that $\theta_{1,2,3,4}\theta_{1,3,4,2}\theta_{1,4,2,3}=1$.
	\end{proof}
	\begin{remark}
		Suppose $g\in\G$ is such that its action on the space of $(n-1)$ dimensional isotropic subspaces has an attracting fixed point $W_a$ and a repelling fixed point $W_r$ and suppose $W_a^\perp$ and $W_r^\perp$ are transverse to each other. We call such elements \emph{proto-pseudo-hyperbolic} and recall that 
		\[g^{\pm1}v^+_{a,r}=\lambda(g)^{\pm1}v^+_{a,r}.\]
	\end{remark} 
	
	\begin{proposition}\label{prop:lambdabeta}
		Suppose $g\in\G$ is a proto-pseudo-hyperbolic element with attracting fixed point $W_a$ and a repelling fixed point $W_r$. Then for any $(n-1)$ dimensional isotropic subspace $W_*$ whose orthogonal space is transverse to both $W_a^\perp$ and $W_r^\perp$ the following holds:
		\begin{enumerate}
			\item $\theta(W_r,W_a,g W_*, W_*)=\lambda(g)^2$ when $n$ is even,
			\item $\theta(W_r,W_a,g W_*, W_*)=1$ when $n$ is odd.
		\end{enumerate}
	\end{proposition}
	\begin{proof}
		We denote $g A_*$ by $A_{g*}$ and use Lemma \ref{lem:id} to deduce that
		\begin{align*}
			\theta(A_r,A_a,g A_*, A_*)&=\frac{\langle v^+_{r,g*}\mid v^-_{a,g*}\rangle\langle v^+_{a,*}\mid v^-_{r,*}\rangle}{\langle v^+_{a,*}\mid v^-_{a,g*}\rangle\langle v^+_{r,g*}\mid v^-_{r,*}\rangle}=\frac{\langle gv^+_{r,*}\mid gv^-_{a,*}\rangle\langle v^+_{a,*}\mid v^-_{r,*}\rangle}{\langle v^+_{a,*}\mid gv^-_{a,*}\rangle\langle gv^+_{r,*}\mid v^-_{r,*}\rangle}\\
			&=\frac{\langle v^+_{r,*}\mid v^-_{a,*}\rangle\langle v^+_{a,*}\mid v^-_{r,*}\rangle}{\langle v^+_{a,*}\mid gv^-_{a,*}\rangle\langle gv^+_{r,*}\mid v^-_{r,*}\rangle}=\frac{\langle v^+_{a,*}\mid v^-_{a,*}\rangle\langle v^+_{r,*}\mid v^-_{r,*}\rangle}{\langle v^+_{a,*}\mid gv^-_{a,*}\rangle\langle gv^+_{r,*}\mid v^-_{r,*}\rangle}.
		\end{align*}
		Again using Lemma \ref{lem:id} twice more we obtain the following two identities:
		\begin{align*}
			\langle v^+_{r,g*}\mid v^-_{r,*}\rangle\langle v^+_{r,a}\mid v^-_{r,a}\rangle&=\langle v^+_{r,g*}\mid v^-_{r,a}\rangle\langle v^-_{r,*}\mid v^+_{r,a}\rangle,\\
			\langle v^-_{a,g*}\mid v^+_{a,*}\rangle\langle v^-_{a,r}\mid v^+_{a,r}\rangle&=\langle v^-_{a,g*}\mid v^+_{a,r}\rangle\langle v^+_{a,*}\mid v^-_{a,r}\rangle.
		\end{align*}
		Therefore, we deduce that:
		\begin{align*}
			\langle gv^+_{r,*}\mid v^-_{r,*}\rangle\langle v^+_{r,a}\mid v^-_{r,a}\rangle&=\langle gv^+_{r,*}\mid v^-_{r,a}\rangle\langle v^-_{r,*}\mid v^+_{r,a}\rangle=\langle v^+_{r,*}\mid g^{-1}v^-_{r,a}\rangle\langle v^-_{r,*}\mid v^+_{r,a}\rangle\\
			&=\lambda(g)^{(-1)^{n-1}}\langle v^+_{r,*}\mid v^-_{r,a}\rangle\langle v^-_{r,*}\mid v^+_{r,a}\rangle\\
			&=\lambda(g)^{(-1)^{n-1}}\langle v^+_{r,*}\mid v^-_{r,*}\rangle\langle v^-_{r,a}\mid v^+_{r,a}\rangle,\\
			\langle gv^-_{a,*}\mid v^+_{a,*}\rangle\langle v^-_{a,r}\mid v^+_{a,r}\rangle&=\langle gv^-_{a,*}\mid v^+_{a,r}\rangle\langle v^+_{a,*}\mid v^-_{a,r}\rangle\\
			&=\langle v^-_{a,*}\mid g^{-1}v^+_{a,r}\rangle\langle v^+_{a,*}\mid v^-_{a,r}\rangle\\
			&=\lambda(g)^{-1}\langle v^-_{a,*}\mid v^+_{a,r}\rangle\langle v^+_{a,*}\mid v^-_{a,r}\rangle\\
			&=\lambda(g)^{-1}\langle v^-_{a,*}\mid v^+_{a,*}\rangle\langle v^+_{a,r}\mid v^-_{a,r}\rangle.
		\end{align*}
		Hence, $\theta(A_r,A_a,g A_*, A_*)=\lambda(g)\lambda(g)^{(-1)^{n}}$ and our result follows.
	\end{proof}
	\begin{remark} \label{rem:conjana}
		Suppose $\rho\in\sHom(\Gamma,\G,\sP^\pm)$. Hence, for all infinite order elements $\gamma\in\Gamma$ we obtain that the action of $\rho(\gamma)$ on the space of $(n-1)$-dimensional isotropic subspaces of $\R^{n,n}$ has an attracting fixed point and a repelling fixed point. We abuse notation and denote the attracting fixed point by $\xi_\rho(\gamma^+)$ and the repelling fixed point by $\xi_\rho(\gamma^-)$. Henceforth, we fix $\rho$ and omit the subscripts $\rho$ from the eigenvalues and crossratios. Also, when there is no confusion of notation, for $a,b,c,d\in\bdry$ all distinct, we denote $\theta(\xi(a), \xi(b), \xi(c), \xi(d))$ by $\theta(a,b,c,d)$.
	\end{remark}
	\begin{proposition}
		Suppose $n$ is even, $\rho\in\sHom(\Gamma,\G,\sP^\pm)$ and $\gamma,\eta\in\Gamma$ are two infinite order elements such that the four points $\gamma^\pm,\eta^\pm\in\bdry$ are distinct and the sequence $\{\gamma^m\eta^k\}_{m\in\mathbb{N}}\subset\Gamma$ contains a subsequence $\{\gamma^{n_i}\eta^k\}_{i\in\mathbb{N}}$ consisting only of infinite order elements. Then the following identity holds:
		\[\lim_{i\to\infty}\frac{\lambda(\gamma^{n_i}\eta^k)^2}{\lambda(\gamma^{n_i})^2\lambda(\eta^k)^2}=\theta(\eta^-, \gamma^-,\gamma^+,\eta^k\gamma^+)\theta(\eta^+,\gamma^+,\gamma^-,\eta^{-k}\gamma^-).\]
	\end{proposition}
	\begin{proof}
		The proof follows exactly word to word as in the proof of Proposition \ref{prop:lim1} by replacing the appearances of $\alpha$ by $\log\lambda$, $\beta$ by $\log\theta$ and replacing the appearances of Proposition \ref{prop:cr} and Proposition \ref{prop:alphabeta} respectively by Proposition \ref{prop:ecr} and Proposition \ref{prop:lambdabeta}.
	\end{proof}
	\begin{proposition}\label{prop:thetalimit}
		Suppose $n$ is even, $\rho\in\sHom(\Gamma,\G,\sP^\pm)$ and $\gamma,\eta\in\Gamma$ are two infinite order elements such that the four points $\gamma^\pm,\eta^\pm\in\bdry$ are distinct and the sequence $\{\gamma^m\eta^m\}_{m\in\mathbb{N}}\subset\Gamma$ contains a subsequence $\{\gamma^{n_i}\eta^{n_i}\}_{i\in\mathbb{N}}$ consisting only of infinite order elements. Then the following identity holds:
		\[\lim_{i\to\infty}\frac{\lambda(\gamma^{n_i}\eta^{n_i})^2}{\lambda(\gamma^{n_i})^2\lambda(\eta^{n_i})^2}= \theta(\eta^-, \gamma^-,\gamma^+,\eta^+)^2.\]
	\end{proposition}
	\begin{proof}
		The proof follows exactly word to word as in the proof of Proposition \ref{prop:lim2} by replacing the appearances of $\alpha$ by $\log\lambda$, $\beta$ by $\log\theta$ and replacing the appearances of Proposition \ref{prop:cr} and Proposition \ref{prop:alphabeta} respectively by Proposition \ref{prop:ecr} and Proposition \ref{prop:lambdabeta}.
	\end{proof}

	\bibliography{Library.bib}
	\bibliographystyle{alpha}
	
\end{document}